\newtheorem{theorem}{\textbf Theorem}[section]
\newtheorem{lemma}{\textbf Lemma}[section]
\newtheorem{remark}{\textbf Remark}[section]
\newtheorem{proposition}{\textbf Proposition}[section]
\newcommand{\be}{\begin{eqnarray}}
\newcommand{\ee}{\end{eqnarray}}
\newcommand{\bes}{\begin{eqnarray*}}
\newcommand{\ees}{\end{eqnarray*}}
\begin{document}
\begin{titlepage}
\title{\bf  Global existence of  strong solutions to the multi-dimensional inhomogeneous incompressible MHD equations}
\author{ Baoquan Yuan\thanks{Corresponding Author: B. Yuan}\ and
Xueli Ke
       \\ School of Mathematics and Information Science,
       \\ Henan Polytechnic University,  Henan,  454000,  China.\\
        (bqyuan@hpu.edu.cn, kexueli123@126.com)
          }
\date{}
\end{titlepage}
\maketitle
\begin{abstract}
This paper is concerned with the Cauchy problem of the
multi-dimensional incompressible magnetohydrodynamic equations with
inhomogeneous density and fractional dissipation. It is shown that
when $\alpha+\beta=1+\frac{n}{2}$ satisfying $1\leq \beta\leq \alpha\leq\min \{\frac{3\beta}{2},\frac{n}{2},1+\frac{n}{4}\}$ and $\frac{n}{4}<\alpha$ for  $n\geq3$ , then the inhomogeneous incompressible MHD equations has a unique global strong solution for the initial data in  Sobolev space which do not need a small condition.
\end{abstract}

\vspace{.2in} {\bf Key words:} inhomogeneous magnetohydrodynamic equations;
fractional dissipation; global strong solution.

\vspace{.2in} {\bf MSC(2000):} 35Q35; 35B65; 76N10.



\section{Introduction}
\setcounter{equation}{0} \vskip .1in
 \qquad In this paper, we are interested in studying the
following multi-dimensional inhomogeneous incompressible
magnetohydrodynamic equations with fractional dissipation:
\begin{equation}\label{1.1}
\begin{cases}
\partial_{t}\rho+div(\rho u)=0,\qquad x\in \mathbb{R}^{n},t>0,\\
\partial_{t}(\rho u )+div(\rho u\otimes u)+(-\Delta)^{\alpha}u+\nabla P=(b\cdot\nabla) b, \\
\partial_{t}b+(u\cdot\nabla)b+(-\Delta)^{\beta}b=(b\cdot\nabla)u,\\
\nabla\cdot u=\nabla\cdot b=0, \\
\rho(x,0)=\rho_{0}(x),\qquad u(x,0)=u_{0}(x),\qquad
b(x,0)=b_{0}(x),
\end{cases}
\end{equation}
where $\rho=\rho(x,t)$ represents the density,
$u=(u_{1},u_{2},\cdot\cdot\cdot,u_{n})$  the fluid velocity,
$b=(b_{1},b_{2},\cdot\cdot\cdot,b_{n})$ the magnetic field and
$P=P(x,t)$ the scalar pressure, respectively. Here $\rho_{0},
u_{0},b_{0}$ are  the prescribed  initial data for the density, the
velocity and  the magnetic field with property $\nabla\cdot
u_{0}=\nabla\cdot b_{0}=0.$ The fractional Laplacian operator $(-\Delta)^{\alpha}$  is defined via
the Fourrier transform
$$\widehat{(-\Delta)^{\alpha}f}(\xi)=|\xi|^{2\alpha}\widehat{f}(\xi),$$
where
$$\widehat{f}(\xi)=\frac{1}{(2\pi)^{\frac{d}{2}}}\int_{\mathbb{R}^{d}}e^{-ix\cdot\xi}f(x)\mathrm{d}x.$$
We write $\Lambda=(-\Delta)^{\frac{1}{2}}$ for notational
convenience. The MHD equations are a combination of the
inhomogeneous incompressible Navier-Stokes equation of fluid
dynamics and Maxwell's equation of electromagnetism, where the
displacement current can be neglected (see,
e.g. \cite{K1987,L1996}).
If it is not affected by the magnetic field, namely $b=0$, the
inhomogeneous MHD equations will reduce into inhomogeneous
Navier-Stokes equation, which has been studied by many scholars,
refer to
\cite{AK1972,AK1990,S1990,TA1982,CK2003,CHW2013,HW2015,L2017,WY2018}
and references therein. If the density $\rho$ is a positive
constant, the system \eqref{1.1} reduces to the homogeneous MHD
equations, which have been extensively studied. For instance, G.
Duvaut and J.-L. Lions \cite{DL1972} constructed the Leray-Hopf type
global weak solutions of MHD equations with the finite energy  method. M.
Sermange and R. Teman further discussed the properties of these
solutions \cite{ST1983}. Later, the MHD equations have been
extensively studied, please see
\cite{K1989,W1997,HX2005,QH2004,MYZ2007} and references therein.
In particular, C. Cao, J. Wu and B. Yuan \cite{CWY2014} examined
the global regularity of MHD equations with only fractional
Laplace operators $(-\Delta)^{\beta}, \beta>1$ and no dissipation
in $H^{s}(\mathbb{R}^{2}), s>2$.

For the inhomogeneous fluid, it has been also studied by many
authors. It is known that J. Gerbeau and C. Le Bris \cite{GB1997}
established the global existence of the weak solutions by the
finite energy in 3D bound domains. Global existence of
strong solutions with small initial data in Besov spaces was
considered by Abidi-Paicu \cite{AP2008}. Moreover, under the
natural compatibility condition
$$-\mu\Delta u_{0}+\nabla
P_{0}-(b_{0}\cdot\nabla)b_{0}=\sqrt{\rho_{o}}g,(P_{0},g)\in
H^{1}\times L^{2},$$
Chen, Tan, and Wang obtained the unique local
strong solutions and the global existence of strong solutions
under data satisfy some smallness condition in the whole space
$\mathbb{R}^{3}$. L$\ddot{u}$, Xu and Zhong \cite{LXZ2017} proved
the global existence and large time asymptotic behavior of strong
solutions to the inhomogeneous MHD equations with vacuum. Relevant
problems can also be referred to \cite{HW2013,G2014,ZY2015}.

Recently, focuses have been on the magnetohydrodynamic equations
with partial or fractional dissipation (see,
e.g.,\cite{CW2011,W2003,CWY2014,Y2019,YZ2018}) and references therein. For example, Dehua Wang and Zhuan Ye in
\cite{WY2018} proved that if $\alpha\ge \frac12+\frac{n}{4}$ and
$n\ge 3$ then there exists a  unique strong solution to the
inhomogeneous incompressible Navier-Stokes equations for large
initial data. When $\alpha=\beta=\frac{1}{2}+\frac{n}{4}$, Ye
\cite{Y2019} got global existence of strong solution  to the
multi-dimensional inhomogeneous incompressible MHD equations.

In this paper, inspired by \cite{Y2019}, when
$\alpha+\beta=1+\frac{n}{2}$ satisfying $1\le \beta\le
\alpha\le\min \{\frac{3\beta}{2},\frac{n}{2},1+\frac{n}{4}\}$ and
$\frac{n}{4}<\alpha$ for  $n\geq3$, we obtain the  global existence
of strong solution to the multi-dimensional inhomogeneous
incompressible MHD equations. Our main result is presented as
follows:
~\\
\begin{theorem}\label{thm1}
Consider the magnetohydrodynamic  equations \ref{1.1} with
$\alpha+\beta=1+\frac{n}{2}$ satisfying $1\le \beta\le \alpha\le\min \{\frac{3\beta}{2},\frac{n}{2},1+\frac{n}{4}\}$ and $\frac{n}{4}+\frac{1}{2}\leq\alpha$ for  $n\geq3$. Assume that the initial data
$(\rho_{0},u_{0},b_{0})$ satisfy the following conditions
$$0<c_{0}<\rho_{0}\in
L^{\infty}(\mathbb{R}^{n}), \qquad\nabla \rho_{0}\in
L^{\frac{2n}{6\alpha-n}}(\mathbb{R}^{n}),$$
$$\nabla\cdot u_{0}=\nabla\cdot b_{0}=0,\qquad
u_{0}\in\dot{H}^{\alpha}(\mathbb{R}^{n}),\qquad b_{0}\in
H^{\beta}(\mathbb{R}^{n}),\qquad\sqrt{\rho_{0}}u_{0}\in
L^{2}(\mathbb{R}^{n}),$$
where $c_0$ is a given positive constant.
 Then the magnetohydrodynamic equations
\eqref{1.1} have a unique global strong solution $(\rho,u,b)$
satisfying for any given $0<T<\infty$
\begin{align}\label{1.2}
\begin{cases}
u\in C([0,T];\dot
H^{\alpha}(\mathbb{R}^{n})),\rho u\in
C([0,T];L^2(\mathbb{R}^{n})), b\in C([0,T];
H^{\beta}(\mathbb{R}^{n})),\\
\rho\in C([0,T];L^q(\mathbb{R}^{n})),
\frac{n}{2\alpha}\leq q<\infty,\\
c_0<\rho\in L^{\infty}(0,T;L^{\infty}(\mathbb{R}^{n})),\nabla\rho\in
L^{\infty}(0,T;L^{\frac{2n}{6\alpha-n}}(\mathbb{R}^{n})),\\
\sqrt{\rho}u,\Lambda^{\alpha}u,t^{\frac{1}{2}}\Lambda^{\alpha}u,\sqrt{\rho}\partial_{t}u\in
L^{\infty}(0,T;L^{2}(\mathbb{R}^{n})),\\
b,\Lambda^{\beta}b,t^{\frac{1}{2}}\Lambda^{\beta}b,\partial_{t}b\in
L^{\infty}(0,T;L^{2}(\mathbb{R}^{n})),\\
\Lambda^{\alpha}u,\Lambda^{2\alpha}u,\sqrt{\rho}\partial_{t}u,\Lambda^{\alpha}\partial_{t}u,\nabla
P\in L^{2}(0,T;L^{2}(\mathbb{R}^{n})),\\
\Lambda^{\beta}b,\Lambda^{2\beta}b,\partial_{t}b,\Lambda^{\beta}\partial_{t}b\in
L^{2}(0,T;L^{2}(\mathbb{R}^{n})).
\end{cases}
\end{align}
Moreover, the solution $(\rho,u,b)$
admits the following decay rates for all $t> 0$
$$\|\Lambda^{\alpha}u\|_{L^{2}}+\|\Lambda^{\beta}b\|_{L^{2}}+\|\sqrt{\rho}\partial_{t}u(t)\|_{L^{2}}+\|\partial_{t}b(t)\|_{L^{2}}\leq
C_{0}t^{-\frac{1}{2}},$$
where $C_{0}$ depends on
$\|\rho_{0}\|_{L^{\infty}},\|\sqrt{\rho_{0}}u_{0}\|_{L^{2}},\|u_{0}\|_{\dot{H}^{\alpha}},\|b_{0}\|_{H^{\beta}}$.
\end{theorem}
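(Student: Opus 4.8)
The plan is to follow the standard three-step programme — local well-posedness, global a priori bounds, continuation — supplemented by a separate uniqueness argument. First I would construct a local strong solution on a maximal interval $[0,T^\ast)$. A convenient scheme is to regularize the density and to iterate a linear problem: given $(u^k,b^k)$, transport $\rho^{k+1}$ along $u^k$ (this preserves $c_0\le\rho^{k+1}\le\|\rho_0\|_{L^\infty}$ because $\nabla\cdot u^k=0$), then solve the linear fractional Stokes system $(-\Delta)^\alpha u^{k+1}+\nabla P^{k+1}=(b^k\cdot\nabla)b^k-\rho^{k+1}(u^k\cdot\nabla)u^k$, $\nabla\cdot u^{k+1}=0$, and the linear fractional heat equation $\partial_t b^{k+1}+(-\Delta)^\beta b^{k+1}=(b^k\cdot\nabla)u^k-(u^k\cdot\nabla)b^k$. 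Uniform bounds on a short time interval, obtained from the same Gagliardo--Nirenberg inequalities used below, together with the Aubin--Lions compactness lemma, yield a local solution with the regularity \eqref{1.2} on $[0,T^\ast)$.

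The core of the proof is the global a priori estimate. I would derive, in order: (i) the basic energy identity --- testing the momentum equation with $u$ and the induction equation with $b$, the Lorentz force term and the induction coupling term cancel, giving
$$\|\sqrt{\rho}\,u(t)\|_{L^2}^2+\|b(t)\|_{L^2}^2+\int_0^t\!\big(\|\Lambda^\alpha u\|_{L^2}^2+\|\Lambda^\beta b\|_{L^2}^2\big)\,ds\le\|\sqrt{\rho_0}\,u_0\|_{L^2}^2+\|b_0\|_{L^2}^2 ;$$
(ii) propagation of $\nabla\rho\in L^\infty_tL^{2n/(6\alpha-n)}$ by differentiating the transport equation ($6\alpha-n>0$ since $\alpha>n/4$, and $2n/(6\alpha-n)\ge1$ since $\alpha\le n/2$); (iii) the first-order estimate --- testing the momentum equation with $\partial_t u$ and the induction equation with $\partial_t b$ (the pressure term drops since $\nabla\cdot\partial_t u=0$), which after rearranging the magnetic coupling into a total time derivative gives an inequality of the form
$$\frac{d}{dt}\Big(\|\Lambda^\alpha u\|_{L^2}^2+\|\Lambda^\beta b\|_{L^2}^2+R(t)\Big)+\|\sqrt{\rho}\,\partial_t u\|_{L^2}^2+\|\partial_t b\|_{L^2}^2\le\mathcal{N}(t),$$
where $R(t)$ is a quadratic correction in $b$ and $\mathcal N$ collects the remaining nonlinear terms; (iv) the stationary fractional Stokes estimate applied to $(-\Delta)^\alpha u+\nabla P=(b\cdot\nabla)b-\rho\partial_t u-\rho(u\cdot\nabla)u$ (via the Leray projection, in $L^2$) to control $\|\Lambda^{2\alpha}u\|_{L^2}+\|\nabla P\|_{L^2}$, and likewise $\|\Lambda^{2\beta}b\|_{L^2}$ from the induction equation, in $L^2_t$; (v) time-weighted versions of (iii)--(iv) and a further estimate obtained by differentiating the equations in $t$ and testing with $\partial_t u,\partial_t b$, which produce $t^{1/2}\Lambda^\alpha u,\ t^{1/2}\Lambda^\beta b\in L^\infty_tL^2$, $\Lambda^\alpha\partial_t u,\ \Lambda^\beta\partial_t b\in L^2_tL^2$ and the claimed $t^{-1/2}$ decay.

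The main obstacle is closing step (iii), i.e. absorbing $\mathcal N$. One bounds terms such as $|\int\rho(u\cdot\nabla)u\cdot\partial_t u|\le C\|u\|_{L^{p_1}}\|\nabla u\|_{L^{p_2}}\|\sqrt{\rho}\,\partial_t u\|_{L^2}$ and the analogous magnetic terms, then interpolates $\|u\|_{L^{p_1}},\|\nabla u\|_{L^{p_2}}$ between $\|u\|_{L^2}$ (controlled by the energy since $\rho\ge c_0$), $\|\Lambda^\alpha u\|_{L^2}$ and $\|\Lambda^{2\alpha}u\|_{L^2}$ via Gagliardo--Nirenberg, and similarly interpolates the magnetic quantities between $\|b\|_{L^2}$, $\|\Lambda^\beta b\|_{L^2}$ and $\|\Lambda^{2\beta}b\|_{L^2}$. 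The scaling identity $\alpha+\beta=1+\tfrac n2$, together with $1\le\beta\le\alpha\le\min\{\tfrac{3\beta}{2},\tfrac n2,1+\tfrac n4\}$ and $\alpha\ge\tfrac n4+\tfrac12$, is precisely what forces every interpolation exponent into $[0,1]$ and, crucially, keeps the exponents of the top-order norms $\|\sqrt{\rho}\,\partial_t u\|_{L^2}$, $\|\partial_t b\|_{L^2}$, $\|\Lambda^{2\alpha}u\|_{L^2}$, $\|\Lambda^{2\beta}b\|_{L^2}$ strictly below $2$, so that Young's inequality absorbs them into the left-hand side and leaves a Gr\"onwall inequality for $\|\Lambda^\alpha u\|_{L^2}^2+\|\Lambda^\beta b\|_{L^2}^2$ with $L^1_t$ coefficients (the constraints $\alpha\le\tfrac{3\beta}{2}$ and $\alpha\le\tfrac n2$ enter most sharply here). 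Carrying out this bookkeeping term by term is the long, computational heart of the argument.

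Finally, the a priori bounds hold on any $[0,T]$ with constants depending only on $T$ and the data, so the local solution extends globally. For uniqueness I would subtract the equations for two solutions with the same data, perform a weighted $L^2$ energy estimate on the differences $(\delta u,\delta b)$, control the density difference $\delta\rho$ in a low-order norm (or via the Lagrangian flow map) to absorb the $\delta\rho$-terms coming from the momentum equation, and close by Gr\"onwall using the regularity \eqref{1.2}.
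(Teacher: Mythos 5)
Your overall strategy coincides with the paper's: local existence by an approximation scheme, the basic energy identity, the $\dot H^{\alpha}\times\dot H^{\beta}$ estimate obtained by testing with $(\partial_t u,\partial_t b)$ and rewriting the Lorentz-force term as a total time derivative, stationary Stokes and parabolic regularity for $\Lambda^{2\alpha}u$, $\nabla P$, $\Lambda^{2\beta}b$, time-weighted and time-differentiated estimates for the decay, and an energy estimate on the difference of two solutions for uniqueness. There are, however, two places where the sketch omits the mechanism that actually makes the argument close.

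First, the correction term $R(t)$ (the paper's $M(t)=\int_{\mathbb{R}^n}(b\cdot\nabla u)\cdot b\,\mathrm{d}x$) is not harmless: the crude bound $|M(t)|\le C(\|\Lambda^{\alpha}u\|_{L^2}^{2}+\|\Lambda^{\beta}b\|_{L^2}^{2})$ cannot be absorbed into the left-hand side, so one needs the sharper lower bound $M(t)\ge-\tfrac12\|\Lambda^{\alpha}u\|_{L^2}^{2}-\tilde C\|\Lambda^{\beta/2}b\|_{L^2}^{4}$ and must then propagate the quartic quantity $\|\Lambda^{\beta/2}b\|_{L^2}^{4}$ by a separate energy estimate at regularity level $\tfrac{\beta}{2}$, which simultaneously supplies $\Lambda^{3\beta/2}b\in L^2_tL^2$. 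That auxiliary dissipation is exactly what the paper interpolates against, e.g. $\|\Lambda^{\alpha}b\|_{L^2}\lesssim\|b\|_{L^2}^{1-2\alpha/(3\beta)}\|\Lambda^{3\beta/2}b\|_{L^2}^{2\alpha/(3\beta)}$, and it is precisely where the hypothesis $\alpha\le\tfrac{3\beta}{2}$ enters. Your proposed interpolation ladder $\|b\|_{L^2},\ \|\Lambda^{\beta}b\|_{L^2},\ \|\Lambda^{2\beta}b\|_{L^2}$ is circular at this stage, because $\Lambda^{2\beta}b\in L^2_tL^2$ is recovered from the induction equation only after $\partial_t b\in L^2_tL^2$ is known, i.e.\ only after step (iii) has already closed. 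Second, two smaller points: the propagation of $\nabla\rho\in L^{\infty}_tL^{2n/(6\alpha-n)}$ cannot be carried out at your stage (ii), since it requires $\nabla u\in L^1_tL^{\infty}$, which itself rests on the $L^p$ Stokes estimate and on $\Lambda^{\alpha}\partial_t u\in L^2_tL^2$ from the later stages (this is harmless for the energy estimates, which never use $\nabla\rho$, but the step must be postponed to the end, as in the paper's Lemma 3.4); and in the uniqueness argument the coefficient multiplying $\|\tilde\rho\|_{L^{n/(2\alpha)}}^{2}$ contains $\|\Lambda^{\alpha}\partial_t u^{(1)}\|_{L^2}^{2}$, which is integrable only with the weight $t$, so a plain Gr\"onwall inequality does not apply and one needs the two-function ODE lemma (Lemma 2.2) adapted to the coupled system for $\|\tilde\rho\|_{L^{n/(2\alpha)}}$ and $\|\sqrt{\rho^{(2)}}\tilde u\|_{L^2}^{2}+\|\tilde b\|_{L^2}^{2}$.
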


\begin{remark}\label{remark1}
For the inhomogeneous incompressible Navier-Stokes equation,
Dehua Wang and Zhuan Ye in \cite{WY2018} proved that if $\alpha\ge
\frac12+\frac{n}{4}$ and $n\ge 3$ then there exists a unique global
strong solution for large initial data.  For the case $\alpha=\beta=\frac{1}{2}+\frac{n}{4}$ Zhuan Ye in \cite{Y2019} obtained the unique global strong solution to the inhomogeneous
MHD equations (\ref{1.1}) for large initial data. The novelty of this paper
is that we establish the unique global strong solution under the conditions $\alpha +\beta=1+\frac{n}{2}$,
$1\le \beta\le \alpha\le\min
\{\frac{3\beta}{2},\frac{n}{2},1+\frac{n}{4}\}$ and
$\frac{n}{4}<\alpha$, which is that $\alpha$ and $\beta$ are not
too far but not necessarily equal. when
$\alpha=\beta=\frac{1}{2}+\frac{n}{4}$ we obtain the result of
\cite{Y2019}, therefor Theorem \ref{thm1} generalizes the results
of \cite{Y2019} to the case $\alpha\neq\beta$, and we require the
initial density is bounded below.
\end{remark}

To prove the global well-posedness Theorem \ref{thm1}, we need the local well-posedness result of strong solutions which can
 be proved  by a standard procedure of approximate scheme. For the proof of this proposition, readers can refer to \cite{Y2019}.

\begin{proposition}\label{proposition1}
$\mathrm{(local\ strong\ solution)}$. Under the assumptions of Theorem
\ref{thm1}. For the initial data $(\rho_0(x), u_0(x), b_0(x))$, there exists a small time $T^{*}$ dependent on $\|\rho_{0}\|_{L^{\infty}},\|\nabla \rho_{0}\|_{
L^{\frac{2n}{6\alpha-n}}},\|\sqrt{\rho_{0}}u_{0}\|_{L^{2}},\|u_{0}\|_{\dot{H}^{\alpha}},\|b_{0}\|_{H^{\beta}}$ such that the MHD equations \eqref{1.1} have a unique strong solution $(\rho, u, b)$ satisfying \eqref{1.2} on $[0,T^*)$.
\end{proposition}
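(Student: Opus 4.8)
The plan is to establish Proposition \ref{proposition1} by the standard linearization–iteration scheme (the same scaffolding used in \cite{Y2019}), adapted to the present range of exponents. Starting from $(\rho^{0},u^{0},b^{0})=(\rho_{0},u_{0},b_{0})$, I would build $(\rho^{k+1},u^{k+1},b^{k+1})$ by first solving the linear transport equation $\partial_{t}\rho^{k+1}+u^{k}\cdot\nabla\rho^{k+1}=0$ with $\rho^{k+1}(0)=\rho_{0}$, so that the pointwise bounds $c_{0}\le\rho^{k+1}\le\|\rho_{0}\|_{L^{\infty}}$ are automatically propagated, and then solving the linear fractional–parabolic system
$$\rho^{k+1}\partial_{t}u^{k+1}+\rho^{k+1}u^{k}\cdot\nabla u^{k+1}+(-\Delta)^{\alpha}u^{k+1}+\nabla P^{k+1}=b^{k}\cdot\nabla b^{k},\qquad \nabla\cdot u^{k+1}=0,$$
coupled with $\partial_{t}b^{k+1}+u^{k}\cdot\nabla b^{k+1}+(-\Delta)^{\beta}b^{k+1}=b^{k}\cdot\nabla u^{k+1}$, $\nabla\cdot b^{k+1}=0$. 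Solvability of each linear step in the natural energy class, together with the recovery of $\nabla P^{k+1}$ from the generalized Stokes/elliptic problem $\mathrm{div}\big((\rho^{k+1})^{-1}\nabla P^{k+1}\big)=\mathrm{div}\big((\rho^{k+1})^{-1}(b^{k}\cdot\nabla b^{k}-(-\Delta)^{\alpha}u^{k+1}-\rho^{k+1}u^{k}\cdot\nabla u^{k+1})\big)$, is classical and uses only the two-sided bounds on $\rho^{k+1}$.

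Next I would derive uniform estimates on a short interval $[0,T^{*}]$. Differentiating the transport equation gives $\|\nabla\rho^{k+1}(t)\|_{L^{p}}\le\|\nabla\rho_{0}\|_{L^{p}}\exp\!\big(C\int_{0}^{t}\|\nabla u^{k}\|_{L^{\infty}}\,ds\big)$ with $p=\tfrac{2n}{6\alpha-n}$; the basic energy identity (using incompressibility and the cancellation that the pairings of $b\cdot\nabla b$ with $u$ and of $b\cdot\nabla u$ with $b$ sum to zero) controls $\|\sqrt{\rho^{k+1}}u^{k+1}\|_{L^{2}}^{2}+\|b^{k+1}\|_{L^{2}}^{2}$ and $\int_{0}^{t}(\|\Lambda^{\alpha}u^{k+1}\|_{L^{2}}^{2}+\|\Lambda^{\beta}b^{k+1}\|_{L^{2}}^{2})$. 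The decisive step is the first-order-in-time estimate: pairing the momentum equation with $\partial_{t}u^{k+1}$ and the magnetic equation with $\partial_{t}b^{k+1}$ gives, schematically,
$$\frac{d}{dt}\big(\|\Lambda^{\alpha}u^{k+1}\|_{L^{2}}^{2}+\|\Lambda^{\beta}b^{k+1}\|_{L^{2}}^{2}\big)+\|\sqrt{\rho^{k+1}}\partial_{t}u^{k+1}\|_{L^{2}}^{2}+\|\partial_{t}b^{k+1}\|_{L^{2}}^{2}\lesssim \|\rho^{k+1}u^{k}\cdot\nabla u^{k+1}\|_{L^{2}}^{2}+\|b^{k}\cdot\nabla b^{k}\|_{L^{2}}^{2}+\cdots,$$
and the products on the right are closed by Sobolev embedding and interpolation between $L^{2}$ and $\dot H^{2\alpha}$ (resp. $\dot H^{2\beta}$). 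This is precisely where the algebraic restrictions $\alpha+\beta=1+\tfrac{n}{2}$, $1\le\beta\le\alpha\le\min\{\tfrac{3\beta}{2},\tfrac{n}{2},1+\tfrac{n}{4}\}$ and $\alpha\ge\tfrac{n}{4}+\tfrac12$ enter: they furnish the embeddings $\dot H^{\alpha}\hookrightarrow L^{q}$ for the exponents needed to place $u\cdot\nabla u$ and $b\cdot\nabla b$ in $L^{2}$, keep $\beta$ large enough that $b$-terms are controlled through $H^{\beta}$, and make $\Lambda^{2\alpha}u\in L^{2}_{t}L^{2}$ yield $\int_{0}^{t}\|\nabla u\|_{L^{\infty}}<\infty$. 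A Stokes/elliptic estimate then bounds $\|\nabla P^{k+1}\|_{L^{2}}$ by the same quantities, and $\Lambda^{2\alpha}u^{k+1},\Lambda^{2\beta}b^{k+1}$ are read off from the equations. Choosing $T^{*}$ small, depending only on the listed data norms, closes all of this on a fixed ball uniformly in $k$.

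Then I would pass to the limit: the uniform bounds together with the bounds on $\partial_{t}u^{k},\partial_{t}b^{k},\partial_{t}\rho^{k}$ give, by Aubin–Lions, strong convergence of $u^{k},b^{k}$ in $C([0,T^{*}];L^{2})$ and of $\rho^{k}$ in $C([0,T^{*}];L^{q})$ (equivalently, one estimates the differences $\rho^{k+1}-\rho^{k}$, $u^{k+1}-u^{k}$, $b^{k+1}-b^{k}$ in low norms and shows the iteration is contractive for $T^{*}$ small), which suffices to pass to the limit in every term and produce a solution with the regularity \eqref{1.2}. For uniqueness, given two solutions with the same data I would write the equations for the differences $(\delta\rho,\delta u,\delta b,\delta P)$, control $\delta\rho$ in a suitable $L^{r}$ from its transport equation, and run an energy estimate for $(\delta u,\delta b)$; the only genuinely delicate term, the pairing of $\delta\rho\,\partial_{t}u_{2}$ with $\delta u$, is handled using $\sqrt{\rho_{2}}\,\partial_{t}u_{2}\in L^{\infty}_{t}L^{2}$ (with the $t^{1/2}$ time weight near $t=0$ if needed) together with interpolation, after which Grönwall forces the differences to vanish.

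The main obstacle is the second step — making the high-order nonlinear estimates close \emph{within the exact range} of $(\alpha,\beta)$, in particular bounding $\rho u\cdot\nabla u$ and $b\cdot\nabla b$ in $L^{2}$ and controlling $\|\nabla u\|_{L^{\infty}}$, and, at the endpoint $\alpha=\tfrac{n}{4}+\tfrac12$ where $W^{2\alpha,2}\hookrightarrow W^{1,\infty}$ just fails, replacing that embedding by a logarithmic (Beale–Kato–Majda-type) inequality combined with the $L^{2}_{t}L^{2}$ bound on $\Lambda^{2\alpha}u$. The magnetic coupling through $b\cdot\nabla b$ and $b\cdot\nabla u$ must be threaded through the same interpolation scheme, which is exactly why $\alpha$ and $\beta$ are tied by $\alpha+\beta=1+\tfrac{n}{2}$ and kept close by $\alpha\le\tfrac{3\beta}{2}$.
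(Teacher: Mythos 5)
Your proposal is correct in outline and coincides with the route the paper itself indicates: the paper gives no proof of Proposition \ref{proposition1}, stating only that it follows ``by a standard procedure of approximate scheme'' and deferring to \cite{Y2019}, and your linearization--iteration construction with short-time uniform energy bounds, compactness/contraction passage to the limit, and a difference-estimate for uniqueness is exactly that standard scheme adapted to the present exponent range. The only small caveat is that in your decoupled linear iteration the exact cancellation between the pairings of $b^{k}\cdot\nabla b^{k}$ with $u^{k+1}$ and of $b^{k}\cdot\nabla u^{k+1}$ with $b^{k+1}$ no longer holds, but both terms are directly absorbable using $\dot H^{\alpha}\hookrightarrow \dot W^{1,n/\beta}$ (which is where $\alpha+\beta=1+\tfrac{n}{2}$ enters), so the estimates still close on a short interval.
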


The rest of this paper is organized in four sections. In Sect. 2,
we introduce several important lemmas, which will be used in the paper. In Sect. 3, we give some a priori estimates for the local solutions. In Sect. 4, we are devoted to
proving Theorem \ref{thm1}. Throughout this paper, we will use $C$ denote a finite inessential constant which may be different from line to line, but do not depend on particular solutions or function.

\section{Preliminaries}
\setcounter{equation}{0}

In this section, we
 give some lemmas. The first one is the classical Gr\"{o}nwall
inequality, which will be used frequently. The second  lemma
is used to prove the uniqueness of strong solutions.


\begin{lemma}\label{lemma1}
Let $X(t)$, $Y(t)$, $\beta(t)$ and $\gamma(t)$ are non-negative
functions, and $\beta(t)$ and $\gamma(t)$ are two
integrable functions over $[a, b]$. If the following differential
inequality holds
$$\frac{\mathrm{d}}{\mathrm{d}t}X(t)+Y(t)\leq\beta(t)+\gamma(t)X(t), \qquad a\leq t
\leq b,$$ then
\begin{align*}
X(t)+\int_{a}^{t}Y(s)\mathrm{d}s\leq
\bigg(X(a)+\int_{a}^{t}\beta(s)\mathrm{d}s\bigg)e^{\int_{a}^{\eta}\gamma(s)\mathrm{d}s}.
\end{align*}
\end{lemma}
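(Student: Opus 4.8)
The plan is to run the standard integrating-factor argument. Define $\Gamma(t)=\int_a^t\gamma(s)\,ds$, which is non-negative and non-decreasing on $[a,b]$ because $\gamma\geq 0$. Multiplying the hypothesized differential inequality by the strictly positive factor $e^{-\Gamma(t)}$ and using $\frac{d}{dt}\bigl(e^{-\Gamma(t)}X(t)\bigr)=e^{-\Gamma(t)}\bigl(X'(t)-\gamma(t)X(t)\bigr)$, one obtains
$$\frac{d}{dt}\bigl(e^{-\Gamma(t)}X(t)\bigr)+e^{-\Gamma(t)}Y(t)\leq e^{-\Gamma(t)}\beta(t),\qquad a\leq t\leq b.$$
Note that no term is discarded here; in particular I keep $Y$ on the left rather than dropping it, since it must appear in the final bound.

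Next I would integrate this inequality over $[a,t]$. Using $\Gamma(a)=0$ this gives
$$e^{-\Gamma(t)}X(t)-X(a)+\int_a^t e^{-\Gamma(s)}Y(s)\,ds\leq\int_a^t e^{-\Gamma(s)}\beta(s)\,ds.$$
Two monotonicity observations then close the argument. On the right-hand side, $e^{-\Gamma(s)}\leq 1$ because $\Gamma(s)\geq 0$, so the right side is at most $\int_a^t\beta(s)\,ds$. On the left-hand side, for every $s\in[a,t]$ we have $\Gamma(s)\leq\Gamma(t)$, hence $e^{-\Gamma(s)}\geq e^{-\Gamma(t)}$, and therefore $\int_a^t e^{-\Gamma(s)}Y(s)\,ds\geq e^{-\Gamma(t)}\int_a^t Y(s)\,ds$ since $Y\geq 0$. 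Combining these yields $e^{-\Gamma(t)}\bigl(X(t)+\int_a^t Y(s)\,ds\bigr)\leq X(a)+\int_a^t\beta(s)\,ds$, and multiplying through by $e^{\Gamma(t)}>0$ produces the asserted estimate
$$X(t)+\int_a^t Y(s)\,ds\leq\Bigl(X(a)+\int_a^t\beta(s)\,ds\Bigr)e^{\int_a^t\gamma(s)\,ds}.$$

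The computation is elementary, so there is no genuine obstacle; the only point needing care is the sign bookkeeping around $e^{-\Gamma}$, and it is precisely the hypothesis $\gamma\geq 0$ (hence $\Gamma$ non-decreasing and non-negative) that simultaneously justifies the bound $e^{-\Gamma(s)}\leq 1$ on the forcing term and the bound $e^{-\Gamma(s)}\geq e^{-\Gamma(t)}$ used to extract $e^{-\Gamma(t)}$ from the $Y$-integral. One should also read the statement as asserting that $t\mapsto e^{-\Gamma(t)}X(t)$ is absolutely continuous on $[a,b]$, so that the fundamental theorem of calculus applies in the integration step; under that reading the proof requires nothing further.
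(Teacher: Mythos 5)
Your argument is correct: it is the standard integrating-factor proof, and the two monotonicity observations (using $\gamma\geq 0$ to get $e^{-\Gamma(s)}\leq 1$ on the $\beta$-term and $e^{-\Gamma(s)}\geq e^{-\Gamma(t)}$ on the $Y$-term) are exactly what is needed to keep $\int_a^t Y$ in the final bound. The paper states this lemma as the classical Gr\"onwall inequality and gives no proof, so there is nothing to compare against; note only that the exponent $\int_a^{\eta}\gamma(s)\,\mathrm{d}s$ in the paper's statement is a typo for $\int_a^{t}\gamma(s)\,\mathrm{d}s$, which is how you correctly read it.
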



\begin{lemma}\label{2.2}$\mathrm{(\cite{L1999})}$
Let $X(t)$, $Y(t)$, $Z(t)$, $\gamma(t)$ and $\eta(t)$ are
non-negative functions. Let $X(t)$ and $Y(t)$ are absolutely
continuous on $[0,T]$ and satisfy
\begin{align*}
\begin{cases}
\frac{\mathrm{d}}{\mathrm{d}t}X(t)\leq AZ^{\frac{1}{2}}(t),\\
\frac{\mathrm{d}}{\mathrm{d}t}Y(t)+Z(t)\leq\gamma(t)Y(t)+\eta(t)X^{2}(t), \\
X(0)=0, \\
\end{cases}
\end{align*}
where A is a positive constant, $\gamma(t)$ and $\eta(t)$ are two
integrable functions over $[0,T]$. Then, the following estimates
hold $$X(t)\leq
AY^{\frac{1}{2}}(0)t^{\frac{1}{2}}e^{\frac{1}{2}\int_{0}^{t}(\gamma(s)+A^{2}s\eta(s))\mathrm{d}s},$$
$$Y(t)+\int_{0}^{t}Z(s)\mathrm{d}s\leq
Y(0)e^{\int_{0}^{t}(\gamma(s)+A^{2}s\eta(s))\mathrm{d}s}.$$ In
particular, if $Y(0)=0$, then we get $$X(t)=Y(t)=Z(t)=0.$$
\end{lemma}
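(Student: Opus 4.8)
The plan is to decouple the system by first turning the $X$-inequality into an integral bound and then folding $Y$ together with the dissipation integral $\int_0^t Z$ into a single quantity to which the scalar Gr\"onwall inequality (Lemma \ref{lemma1}) applies. First I would integrate $\frac{\mathrm{d}}{\mathrm{d}t}X \le A Z^{1/2}$ from $0$ to $t$, using $X(0)=0$ together with Cauchy--Schwarz, to obtain
$$X(t) \le A\int_0^t Z^{1/2}(s)\,\mathrm{d}s \le A\,t^{1/2}\Big(\int_0^t Z(s)\,\mathrm{d}s\Big)^{1/2},$$
and hence $X^2(t) \le A^2 t \int_0^t Z(s)\,\mathrm{d}s$. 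This is the mechanism that converts the quadratic term $\eta X^2$ into something controlled by the dissipation integral, and it is exactly what produces the weight $A^2 s\,\eta(s)$ in the final exponent.

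Next I would introduce the combined functional $G(t) := Y(t) + \int_0^t Z(s)\,\mathrm{d}s$. Since $X$ and $Y$ are absolutely continuous and $Z$ is integrable on $[0,T]$ (it is dominated by $\gamma Y + \eta X^2 - Y'$, each term integrable because $Y,X$ are continuous hence bounded and $\gamma,\eta,Y'$ are integrable), the function $G$ is absolutely continuous with $G'(t)=Y'(t)+Z(t)$ almost everywhere. Inserting the second hypothesis and the bound on $X^2$ gives $G'(t) \le \gamma(t)Y(t) + A^2 t\,\eta(t)\int_0^t Z\,\mathrm{d}s$. Because $\gamma,\eta\ge 0$ and both $Y(t)\le G(t)$ and $\int_0^t Z\le G(t)$ (each summand of $G$ is non-negative), this closes into the single differential inequality $G'(t)\le (\gamma(t)+A^2 t\,\eta(t))\,G(t)$.

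Applying Lemma \ref{lemma1} with the dissipation term absent, zero forcing, and coefficient $\gamma(t)+A^2 t\,\eta(t)$ (which is integrable on $[0,T]$), and noting $G(0)=Y(0)$, yields
$$Y(t)+\int_0^t Z(s)\,\mathrm{d}s = G(t) \le Y(0)\,e^{\int_0^t (\gamma(s)+A^2 s\,\eta(s))\,\mathrm{d}s},$$
which is the second asserted estimate. Feeding this back into step one, and using $\int_0^t Z\le G(t)$, gives $X^2(t)\le A^2 t\,Y(0)\,e^{\int_0^t(\gamma+A^2 s\,\eta)\,\mathrm{d}s}$; taking square roots produces the first estimate.

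The degenerate case $Y(0)=0$ follows at once: both displayed bounds force $X(t)=0$ and $Y(t)+\int_0^t Z=0$, and non-negativity then gives $Y\equiv 0$ and $Z=0$ almost everywhere. I expect the only genuine subtlety to be the coupling between $X$ and $Z$ in the two inequalities; the combined-functional device resolves it cleanly, so the remaining work is the routine verification of the integrability and absolute-continuity facts needed to justify the fundamental theorem of calculus and the Gr\"onwall step.
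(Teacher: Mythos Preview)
Your argument is correct: the Cauchy--Schwarz step $X(t)\le A t^{1/2}\big(\int_0^t Z\big)^{1/2}$ followed by the combined functional $G=Y+\int_0^t Z$ closes the system into a single Gr\"onwall inequality with coefficient $\gamma(t)+A^2 t\,\eta(t)$, exactly producing the two stated bounds. Note, however, that the paper does not actually prove this lemma; it simply quotes it from \cite{L1999}, so there is no in-paper proof to compare against---your write-up supplies precisely the standard argument that the cited reference contains.
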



\section{A priori estimates}
\setcounter{equation}{0}

 In this section, we establish
some necessary  bounds for a strong solution. The
following Lemma \ref{lemma3.1} shows the estimates of $\|\rho(t)\|_{
L^{\infty}(0,T;L^{\infty})}$ and $\|(\sqrt{\rho}u,b)(t)\|_{L^{\infty}(0,T;L^{2})}$.


\begin{lemma}\label{lemma3.1}
Under the assumptions of Theorem \ref{thm1}, the corresponding
solution $(\rho,u,b)$ of the equations \eqref{1.1} admits the
following bounds for any $t>0$
\begin{align}\label{3.1}
\|\rho(t)\|_{
L^{\infty}}\leq\|\rho_{0}\|_{
L^{\infty}},
\end{align}
\begin{align}\label{3.2}
\|(\sqrt{\rho}u,b)(t)\|_{L^{2}}^{2}+2\int_{0}^{t}\|(\Lambda^{\alpha}u,\Lambda^{\beta}b)(\tau)\|_{L^{2}}^{2}\mathrm{d}\tau\leq\|(\sqrt{\rho_{0}}u_{0},b_{0})\|_{L^{2}}^{2}.
\end{align}
\end{lemma}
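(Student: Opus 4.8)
The plan is to establish the two bounds \eqref{3.1} and \eqref{3.2} by the standard energy method for inhomogeneous fluids, treating them in order. First I would prove \eqref{3.1}: since $\nabla\cdot u=0$, the continuity equation $\partial_t\rho+\operatorname{div}(\rho u)=0$ becomes the pure transport equation $\partial_t\rho+u\cdot\nabla\rho=0$, so $\rho$ is constant along the (measure-preserving) flow of $u$. Hence $\rho(x,t)$ takes values in the essential range of $\rho_0$, giving both the upper bound $\|\rho(t)\|_{L^\infty}\leq\|\rho_0\|_{L^\infty}$ and (although not needed here) the lower bound $\rho(x,t)\geq c_0$; more carefully, one multiplies the continuity equation by $p\rho^{p-1}$, integrates, uses $\nabla\cdot u=0$ to kill the flux term, obtains $\|\rho(t)\|_{L^p}=\|\rho_0\|_{L^p}$ for all finite $p$, and lets $p\to\infty$.

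Next I would prove the energy inequality \eqref{3.2}. The idea is to take the $L^2$ inner product of the momentum equation $\partial_t(\rho u)+\operatorname{div}(\rho u\otimes u)+(-\Delta)^\alpha u+\nabla P=(b\cdot\nabla)b$ with $u$, and the $L^2$ inner product of the magnetic equation $\partial_t b+(u\cdot\nabla)b+(-\Delta)^\beta b=(b\cdot\nabla)u$ with $b$, then add. Using the continuity equation one rewrites $\int\big(\partial_t(\rho u)+\operatorname{div}(\rho u\otimes u)\big)\cdot u\,dx=\frac12\frac{d}{dt}\int\rho|u|^2\,dx$ (the standard Reynolds-transport identity for variable density). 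The dissipative terms give $\|\Lambda^\alpha u\|_{L^2}^2$ and $\|\Lambda^\beta b\|_{L^2}^2$. The pressure term $\int\nabla P\cdot u\,dx=-\int P\,\nabla\cdot u\,dx=0$ by incompressibility. For the magnetic advection term, $\int (u\cdot\nabla)b\cdot b\,dx=\frac12\int u\cdot\nabla|b|^2\,dx=0$ again by $\nabla\cdot u=0$. The key cancellation is between the two coupling terms: $\int (b\cdot\nabla)b\cdot u\,dx+\int(b\cdot\nabla)u\cdot b\,dx=\int b\cdot\nabla(u\cdot b)\,dx=0$ since $\nabla\cdot b=0$. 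Collecting everything yields
\begin{align*}
\frac{1}{2}\frac{d}{dt}\Big(\|\sqrt{\rho}u\|_{L^2}^2+\|b\|_{L^2}^2\Big)+\|\Lambda^\alpha u\|_{L^2}^2+\|\Lambda^\beta b\|_{L^2}^2=0,
\end{align*}
and integrating in time over $[0,t]$ gives \eqref{3.2} as an equality (hence in particular the stated inequality).

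The main obstacle is not any single estimate but making the above manipulations rigorous at the regularity level of a \emph{strong} solution as specified in \eqref{1.2}: one must justify the integration by parts and the use of the continuity equation inside the time-derivative identity, which requires either that $\sqrt{\rho}u\in C([0,T];L^2)$ with $\sqrt{\rho}\,\partial_t u\in L^2$ (both provided by Proposition \ref{proposition1}) or a mollification/approximation argument, and one must check that all boundary terms at spatial infinity vanish — this uses the decay built into the Sobolev-space framework and the finiteness of the energy. The coupling-term cancellation and the density-transport structure are the conceptual heart, but they are classical; the care needed is purely in the functional-analytic bookkeeping, and since we are permitted to invoke the local existence result with its regularity class \eqref{1.2}, these justifications are routine. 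I would therefore present the formal computation and remark that it is rigorous for the strong solutions of Proposition \ref{proposition1}.
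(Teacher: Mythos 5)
Your proposal is correct and follows essentially the same route as the paper: the maximum principle for $\rho$ via the transport structure and $\nabla\cdot u=0$, and the energy identity obtained by testing the momentum and magnetic equations with $(u,b)$, using the continuity equation to produce $\frac12\frac{d}{dt}\|\sqrt{\rho}u\|_{L^2}^2$, and exploiting the cancellation of the two coupling terms. Your write-up simply spells out the cancellations and the $L^p\to L^\infty$ limit that the paper leaves implicit.
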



\begin{proof}
We note that \eqref{3.1} follows from the property of transport equation
$\eqref{1.1}_{1}$ and using the divergence free condition $\eqref{1.1}_{4}$. To prove \eqref{3.2}, multiplying the equation
$\eqref{1.1}_{2,3}$ by $(u,b)$, adding the results together and integrating by parts, we have
\begin{align}\label{3.3}
\frac{1}{2}\frac{\mathrm{d}}{\mathrm{d}t}\|(\sqrt{\rho}u,b)(t)\|_{L^{2}}^{2}+\|(\Lambda^{\alpha}u,\Lambda^{\beta}b)(t)\|_{L^{2}}^{2}=0.
\end{align}
 Integrating \eqref{3.3} over $[0,t]$, we arrive at
\eqref{3.2}.
\end{proof}

Next, we calculate the estimates in the
$L^{\infty}(0,T;\dot{H}^{\alpha}(\mathbb{R}^{n}))$-norm of the
velocity field and the $L^{\infty}(0,T;\dot H^{\beta}(\mathbb{R}^{n}))$-norm
of the
magnetic field.


\begin{lemma}\label{lemma3.2}
Under the assumptions of Theorem \ref{thm1}, the corresponding
solution $(\rho,u,b)$ of the equations \eqref{1.1} admits the
following bounds for any $t>0$
\begin{align}\label{3.4}
\|(\Lambda^{\alpha}u,\Lambda^{\beta}b)(t)\|_{L^{2}}^{2}+\|\Lambda^{\frac{\beta}{2}}b(t)\|_{L^{2}}^{4}+\int_{0}^{t}\|(\Lambda^{2\alpha}u,\Lambda^{2\beta}b,\sqrt{\rho}\partial_{\tau}u,\partial_{\tau}b)(\tau)\|_{L^{2}}^{2}\mathrm{d}\tau\leq
C_{0},
\end{align}
\begin{align}\label{3.5}
t\|(\Lambda^{\alpha}u,\Lambda^{\beta}b)(t)\|_{L^{2}}^{2}+t\|\Lambda^{\frac{\beta}{2}}b(t)\|_{L^{2}}^{4}+\int_{0}^{t}\tau\|(\Lambda^{2\alpha}u,\Lambda^{2\beta}b,\sqrt{\rho}\partial_{\tau}u,\partial_{\tau}b)(\tau)\|_{L^{2}}^{2}\mathrm{d}\tau\leq
C_{0},
\end{align}
where $C_{0}$ depends only on
$\|\rho_0\|_{L^{\infty}},\|\sqrt{\rho_{0}}u_{0}\|_{L^{2}},\|\Lambda^{\alpha}u_{0}\|_{L^{2}},\|b_{0}\|_{H^{\beta}},$
and  $C_{0}(t)$ depends only on the initial data and time $t$.
\end{lemma}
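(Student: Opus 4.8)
The plan is to run a second-order energy estimate for the time derivatives $(\partial_t u,\partial_t b)$. Using the continuity equation $\eqref{1.1}_1$, rewrite the momentum equation as $\rho(\partial_t u+u\cdot\nabla u)+(-\Delta)^{\alpha}u+\nabla P=(b\cdot\nabla)b$. Take the $L^2$ inner product of this with $\partial_t u$ and of $\eqref{1.1}_3$ with $\partial_t b$, and add. The dissipative terms give $\tfrac12\tfrac{\mathrm d}{\mathrm dt}\big(\|\Lambda^{\alpha}u\|_{L^2}^2+\|\Lambda^{\beta}b\|_{L^2}^2\big)$; the pressure term drops since $\nabla\cdot\partial_t u=0$; and on the left one is left with $\|\sqrt{\rho}\,\partial_t u\|_{L^2}^2+\|\partial_t b\|_{L^2}^2$. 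Here the hypothesis $0<c_0\le\rho$ is essential: it gives $\|u\|_{L^2}\le c_0^{-1/2}\|\sqrt{\rho}\,u\|_{L^2}$ and $\|\partial_t u\|_{L^2}\le c_0^{-1/2}\|\sqrt{\rho}\,\partial_t u\|_{L^2}$, so the $\sqrt{\rho}$-weighted bounds from Lemma \ref{lemma3.1} and the ones produced here yield genuine $L^2$ control of $u$ and $\partial_t u$.

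The right-hand side is the sum of the four nonlinearities $-\int\rho(u\cdot\nabla u)\cdot\partial_t u$, $-\int(u\cdot\nabla b)\cdot\partial_t b$, $\int(b\cdot\nabla)b\cdot\partial_t u$ and $\int(b\cdot\nabla)u\cdot\partial_t b$. The first two are handled by H\"older's inequality and absorbed into fractions of $\|\sqrt{\rho}\,\partial_t u\|_{L^2}^2$, resp.\ $\|\partial_t b\|_{L^2}^2$, after bounding $\|u\cdot\nabla u\|_{L^2}$ and $\|u\cdot\nabla b\|_{L^2}$ by Gagliardo--Nirenberg interpolation among $\|u\|_{L^2},\|\Lambda^{\alpha}u\|_{L^2},\|\Lambda^{2\alpha}u\|_{L^2}$ and $\|b\|_{L^2},\|\Lambda^{\beta}b\|_{L^2},\|\Lambda^{2\beta}b\|_{L^2}$; the scaling identity $\alpha+\beta=1+\tfrac n2$ together with $\beta\le\alpha\le\min\{\tfrac{3\beta}{2},\tfrac n2,1+\tfrac n4\}$ is precisely what keeps all interpolation exponents admissible and lands the products in $L^2$. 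The genuinely delicate pieces are the magnetic tension terms: $\int(b\cdot\nabla)b\cdot\partial_t u$ cannot be absorbed directly since only $\partial_t u$, not $\nabla\partial_t u$, is available. The remedy is to write it as $\tfrac{\mathrm d}{\mathrm dt}\int(b\cdot\nabla)b\cdot u$ minus lower-order terms (one of which recombines, after integration by parts using $\nabla\cdot b=0$, with $\int(b\cdot\nabla)u\cdot\partial_t b$), move the exact time derivative to the left, and thus work with a modified energy $\mathcal E(t)=\|\Lambda^{\alpha}u\|_{L^2}^2+\|\Lambda^{\beta}b\|_{L^2}^2+(\text{correction})$. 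The correction is controlled by $\tfrac12\|\Lambda^{\alpha}u\|_{L^2}^2$ plus a multiple of $\|\Lambda^{\beta/2}b\|_{L^2}^4$ after interpolation — this is exactly why the quantity $\|\Lambda^{\beta/2}b\|_{L^2}^4$ appears in \eqref{3.4}--\eqref{3.5} — and a companion estimate for $\|\Lambda^{\beta/2}b\|_{L^2}^4$ itself is obtained by testing $\eqref{1.1}_3$ with $\Lambda^{\beta}b$ and multiplying by $\|\Lambda^{\beta/2}b\|_{L^2}^2$.

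To close the argument, I would recover the top-order norms from the equations: $(-\Delta)^{\alpha}u=(b\cdot\nabla)b-\rho(\partial_t u+u\cdot\nabla u)-\nabla P$ gives $\|\Lambda^{2\alpha}u\|_{L^2}$ once $\|\nabla P\|_{L^2}$ is estimated via the elliptic problem $\Delta P=\nabla\cdot\big[(b\cdot\nabla)b-\rho(\partial_t u+u\cdot\nabla u)\big]$ and the Calder\'on--Zygmund inequality, while $\eqref{1.1}_3$ yields $\|\Lambda^{2\beta}b\|_{L^2}\lesssim\|\partial_t b\|_{L^2}+\|u\cdot\nabla b\|_{L^2}+\|b\cdot\nabla u\|_{L^2}$. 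Substituting back, every nonlinear contribution is dominated, after Young's inequality, by $C\,\Phi(\mathcal E(t))\big(\|\Lambda^{\alpha}u\|_{L^2}^2+\|\Lambda^{\beta}b\|_{L^2}^2\big)$ for some polynomial $\Phi$, the last factor being integrable on $[0,t]$ by \eqref{3.2}. Then \eqref{3.4} follows from the Gr\"onwall inequality (Lemma \ref{lemma1}), and \eqref{3.5} follows by multiplying the differential inequality by $\tau$, using $\tau\tfrac{\mathrm d}{\mathrm d\tau}\mathcal E=\tfrac{\mathrm d}{\mathrm d\tau}(\tau\mathcal E)-\mathcal E$ together with \eqref{3.2} and \eqref{3.4}, and applying Gr\"onwall once more.

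The main obstacle I anticipate is the nonlinear bookkeeping: one must simultaneously keep all Gagliardo--Nirenberg exponents in $[0,1]$ (this is where the three-sided constraint relating $\alpha$ to $\beta$, $\tfrac n2$ and $1+\tfrac n4$ is consumed), leave enough dissipation $\int(\|\Lambda^{2\alpha}u\|_{L^2}^2+\|\Lambda^{2\beta}b\|_{L^2}^2+\|\sqrt{\rho}\,\partial_t u\|_{L^2}^2+\|\partial_t b\|_{L^2}^2)$ to absorb the top-order pieces, and arrange the surviving low-order factor to be exactly $\|\Lambda^{\alpha}u\|_{L^2}^2+\|\Lambda^{\beta}b\|_{L^2}^2$ (up to the $\|\Lambda^{\beta/2}b\|_{L^2}^4$ correction) so that Lemma \ref{lemma3.1} supplies the required time integrability. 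The magnetic tension term tested against $\partial_t u$ is the one place where a naive estimate fails and the time-derivative/modified-energy device is indispensable.
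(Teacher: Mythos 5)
Your proposal is correct and follows essentially the same route as the paper: testing with $(\partial_t u,\partial_t b)$, moving the magnetic tension term $\int(b\cdot\nabla b)\cdot\partial_t u$ into a modified energy whose correction is bounded below by $-\tfrac12\|\Lambda^{\alpha}u\|_{L^2}^2-C\|\Lambda^{\beta/2}b\|_{L^2}^4$, closing with a companion estimate for $\|\Lambda^{\beta/2}b\|_{L^2}^4$, recovering $\|\Lambda^{2\alpha}u\|_{L^2}$ and $\|\Lambda^{2\beta}b\|_{L^2}$ from the Stokes system and the induction equation, and applying Gr\"onwall (and the $\tau$-weighted variant for \eqref{3.5}). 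The only differences are cosmetic, e.g.\ the paper applies $\Lambda^{\beta/2}$ to $\eqref{1.1}_3$ and tests with $\Lambda^{\beta/2}b$ rather than testing with $\Lambda^{\beta}b$, and estimates the leftover piece of the tension term directly rather than recombining it with $\int(b\cdot\nabla u)\cdot\partial_t b$.
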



\begin{proof}
First, multiplying $\eqref{1.1}_{2}$ by $\partial_{t}u$  and  integrating by parts, we get
\begin{align}\label{3.6}
\frac{1}{2}\frac{\mathrm{d}}{\mathrm{d}t}\|\Lambda^{\alpha}u(t)\|_{L^{2}}^{2}+\|\sqrt{\rho}\partial_{t}u\|_{L^{2}}^{2}=-\int_{\mathbb{R}^{n}}(\rho
u\cdot\nabla u)\cdot\partial_{t}u\mathrm{d}x+\int_{\mathbb{R}^{n}}
(b\cdot\nabla b)\cdot\partial_{t}u\mathrm{d}x.
\end{align}
Second,  taking the $L^{2}$ inner product of  $\eqref{1.1}_{3}$ with $\partial_{t}b$ and integrating by parts, we obtain
\begin{align}\label{3.7}
\frac{1}{2}\frac{\mathrm{d}}{\mathrm{d}t}\|\Lambda^{\beta}b(t)\|_{L^{2}}^{2}+\|\partial_{t}b\|_{L^{2}}^{2}=-\int_{\mathbb{R}^{n}}
(u\cdot\nabla
b)\cdot\partial_{t}b\mathrm{d}x+\int_{\mathbb{R}^{n}}
(b\cdot\nabla u)\cdot\partial_{t}b\mathrm{d}x.
\end{align}
Then, adding the equalities \eqref{3.6} and \eqref{3.7} together, we have
\begin{align}\label{3.8}
\frac{1}{2}\frac{\mathrm{d}}{\mathrm{d}t}\|(\Lambda^{\alpha}u,\Lambda^{\beta}b)(t)\|_{L^{2}}^{2}
+\|(\sqrt{\rho}\partial_{t}u,\partial_{t}b)\|_{L^{2}}^{2}=\sum_{i=1}^{4}I_{i},
\end{align}
where $$I_{1}=-\int_{\mathbb{R}^{n}}(\rho u\cdot\nabla
u)\cdot\partial_{t}u\mathrm{d}x,\qquad I_{2}=\int_{\mathbb{R}^{n}}
(b\cdot\nabla b)\cdot\partial_{t}u\mathrm{d}x,$$
$$I_{3}=-\int_{\mathbb{R}^{n}}
(u\cdot\nabla b)\cdot\partial_{t}b\mathrm{d}x,\qquad
I_{4}=\int_{\mathbb{R}^{n}} (b\cdot\nabla
u)\cdot\partial_{t}b\mathrm{d}x.$$

We use H\"older,
 Gagliardo-Nirenberg's inequalities and the a priori estimates \eqref{3.1},
\eqref{3.2} to estimate each term on the right-hand side of
\eqref{3.8} as follows:
\begin{align}\label{3.9}
\mid I_{1}\mid &\leq
C\|\sqrt{\rho}\|_{L^{\infty}}\|\sqrt{\rho}\partial_{t}u\|_{L^{2}}\|u\cdot\nabla
u\|_{L^{2}}\notag\\
&\leq
C\|\rho_{0}\|_{L^{\infty}}^{\frac{1}{2}}\|\sqrt{\rho}\partial_{t}u\|_{L^{2}}\|u\|_{L^{\frac{2n}{n-2\alpha}}}\|\nabla
u\|_{L^{\frac{n}{\alpha}}}\notag\\
&\leq
C\|\sqrt{\rho}\partial_{t}u\|_{L^{2}}\|\Lambda^{\alpha}u\|_{L^{2}}\|\Lambda^{\beta}
u\|_{L^{2}}\notag\\
&\leq
C\|\sqrt{\rho}\partial_{t}u\|_{L^{2}}\|\Lambda^{\alpha}u\|_{L^{2}}(\|u\|_{L^{2}}^{1-\frac{\beta}{\alpha}}\|\Lambda^{\alpha}
u\|_{L^{2}}^{\frac{\beta}{\alpha}})\notag\\
&\leq
C\|\sqrt{\rho}\partial_{t}u\|_{L^{2}}\|\Lambda^{\alpha}u\|_{L^{2}}(\|u\|_{L^{2}}+\|\Lambda^{\alpha}
u\|_{L^{2}})\notag\\
&\leq
C\|\sqrt{\rho}\partial_{t}u\|_{L^{2}}\|\Lambda^{\alpha}u\|_{L^{2}}(\|\sqrt{\rho}u\|_{L^{2}}+\|\Lambda^{\alpha}
u\|_{L^{2}})\notag\\
&\leq
\frac{1}{2}\|\sqrt{\rho}\partial_{t}u\|_{L^{2}}^{2}+C\|\Lambda^{\alpha}u\|_{L^{2}}^{2}(1+\|\Lambda^{\alpha}
u\|_{L^{2}}^{2}).
\end{align}

By a similar argument as the estimate (\ref{3.9}), we have
\begin{align}\label{3.11}
\mid I_{3}\mid+\mid I_{4}\mid&\leq
C\|\partial_{t}b\|_{L^{2}}(\|\nabla
b\|_{L^{\frac{n}{\alpha}}}\|u\|_{L^{\frac{2n}{n-2\alpha}}}+\|\nabla
u\|_{L^{\frac{n}{\beta}}}\|b\|_{L^{\frac{2n}{n-2\beta}}})\notag\\
&\leq C\|\partial_{t}b\|_{L^{2}}(\|\Lambda^{\beta}
b\|_{L^{2}}\|\Lambda^{\alpha}u\|_{L^{2}}+\|\Lambda^{\alpha}
u\|_{L^{2}}\|\Lambda^{\beta}b\|_{L^{2}})\notag\\
&\leq \frac{1}{4}\|\partial_{t}b\|_{L^{2}}^{2}+C\|\Lambda^{\alpha}
u\|_{L^{2}}^{2}\|\Lambda^{\beta}b\|_{L^{2}}^{2}.
\end{align}

For the term $I_2$, by a simple calculation to transform the derivative with respect to $t$ from $u(t)$ to $b(t)$, and the similar argument yields

\begin{align}\label{3.10}
 I_{2}&=\frac{\mathrm{d}}{\mathrm{d}t}\int_{\mathbb{R}^{n}}(b\cdot\nabla
b)\cdot
u\mathrm{d}x-\int_{\mathbb{R}^{n}}(\partial_{t}b\cdot\nabla
b)\cdot
u\mathrm{d}x-\int_{\mathbb{R}^{n}}(b\cdot\nabla\partial_{t}b)\cdot
u\mathrm{d}x\notag\\
&=\frac{\mathrm{d}}{\mathrm{d}t}\int_{\mathbb{R}^{n}}(b\cdot\nabla b)\cdot
u\mathrm{d}x-\int_{\mathbb{R}^{n}}(\partial_{t}b\cdot\nabla
b)\cdot
u\mathrm{d}x-\int_{\mathbb{R}^{n}}b_{i}\partial_{i}\partial_{t}b_{j}
u_{j}\mathrm{d}x\notag\\
&=\frac{\mathrm{d}}{\mathrm{d}t}\int_{\mathbb{R}^{n}}(b\cdot\nabla b)\cdot
u\mathrm{d}x-\int_{\mathbb{R}^{n}}(\partial_{t}b\cdot\nabla
b)\cdot u\mathrm{d}x+\int_{\mathbb{R}^{n}}b_{i}\partial_{t}b_{j}
\partial_{i}u_{j}\mathrm{d}x\notag\\
&\leq\frac{\mathrm{d}}{\mathrm{d}t}\int_{\mathbb{R}^{n}}(b\cdot\nabla b)\cdot
u\mathrm{d}x+C\|\partial_{t}b\|_{L^{2}}\|\nabla
b\|_{L^{\frac{n}{\alpha}}}\|u\|_{L^{\frac{2n}{n-2\alpha}}}+C\|\partial_{t}b\|_{L^{2}}\|\nabla
u\|_{L^{\frac{n}{\beta}}}\|b\|_{L^{\frac{2n}{n-2\beta}}}\notag\\
&\leq\frac{\mathrm{d}}{\mathrm{d}t}\int_{\mathbb{R}^{n}}(b\cdot\nabla b)\cdot
u\mathrm{d}x+C\|\partial_{t}b\|_{L^{2}}(\|\Lambda^{\beta}
b\|_{L^{2}}\|\Lambda^{\alpha}u\|_{L^{2}}+\|\Lambda^{\alpha}
u\|_{L^{2}}\|\Lambda^{\beta}b\|_{L^{2}})\notag\\
&\leq\frac{\mathrm{d}}{\mathrm{d}t}\int_{\mathbb{R}^{n}}(b\cdot\nabla b)\cdot
u\mathrm{d}x+\frac{1}{4}\|\partial_{t}b\|_{L^{2}}^{2}+C\|\Lambda^{\alpha}
u\|_{L^{2}}^{2}\|\Lambda^{\beta}b\|_{L^{2}}^{2}.
\end{align}

Inserting estimates $\eqref{3.9}-\eqref{3.10}$ into \eqref{3.8}, we obtain
\begin{align}\label{3.12}
\frac{\mathrm{d}}{\mathrm{d}t}(\|(\Lambda^{\alpha}u,\Lambda^{\beta}b)(t)\|_{L^{2}}^{2}+M(t))
+\|(\sqrt{\rho}\partial_{t}u,\partial_{t}b)\|_{L^{2}}^{2}\leq
C\|\Lambda^{\alpha} u\|_{L^{2}}^{2}(\|(\Lambda^{\alpha}
u,\Lambda^{\beta}b)\|_{L^{2}}^{2}+1),
\end{align}
where, by integrating by parts, we use
$$\frac{\mathrm{d}}{\mathrm{d}t}\int_{\mathbb{R}^{n}}(b\cdot\nabla b)\cdot
u\mathrm{d}x=-\frac{\mathrm{d}}{\mathrm{d}t}\int_{\mathbb{R}^{n}}(b\cdot\nabla
u)\cdot b\mathrm{d}x,$$
and we denote
$$M(t)\triangleq\int_{\mathbb{R}^{n}}(b\cdot\nabla
u)\cdot b\mathrm{d}x.$$
 On the one hand, we can obtain by direct
calculation
\begin{align}\label{3.13}
\mid M(t)\mid&\leq\|b\|_{L^{2}}\|b\cdot\nabla u\|_{L^{2}}\notag\\
&\leq C\|b_{0}\|_{L^{2}}\|\nabla
u\|_{L^{\frac{n}{\beta}}}\|b\|_{L^{\frac{2n}{n-2\beta}}}\notag\\
&\leq C(\|\Lambda^{\alpha}
u\|_{L^{2}}^{2}+\|\Lambda^{\beta}b\|_{L^{2}}^{2}).
\end{align}
On the other hand, we also obtain
\begin{align*}
\mid M(t)\mid&\leq\|b\|_{L^{\frac{2n}{n-\beta}}}^{2}\|\nabla u\|_{L^{\frac{n}{\beta}}}\\
&\leq C\|\Lambda^{\alpha}
u\|_{L^{2}}\|\Lambda^{\frac{\beta}{2}}b\|_{L^{2}}^{2}\\
&\leq \frac{1}{2}\|\Lambda^{\alpha}
u\|_{L^{2}}^{2}+C\|\Lambda^{\frac{\beta}{2}}b\|_{L^{2}}^{4},
\end{align*}
which immediately gives
\begin{align}\label{3.14}
M(t)\geq-\frac{1}{2}\|\Lambda^{\alpha}
u\|_{L^{2}}^{2}-\tilde{C}\|\Lambda^{\frac{\beta}{2}}b\|_{L^{2}}^{4}.
\end{align}
In order to close the above inequality, we need to deal with
$\|\Lambda^{\frac{\beta}{2}}b\|_{L^{2}}$. We apply the operator
$\Lambda^{\frac{\beta}{2}}$ on the both sides of the equation $\eqref{1.1}_{3}$ and
multiply the result equation by $\Lambda^{\frac{\beta}{2}}b$ to deduce that
\begin{align}\label{3.15}
\frac{1}{2}\frac{\mathrm{d}}{\mathrm{d}t}\|\Lambda^{\frac{\beta}{2}}b(t)\|_{L^{2}}^{2}+\|\Lambda^{\frac{3\beta}{2}}b\|_{L^{2}}^{2}=-\int_{\mathbb{R}^{n}}
\Lambda^{\frac{\beta}{2}}(u\cdot\nabla
b)\cdot\Lambda^{\frac{\beta}{2}}b\mathrm{d}x+\int_{\mathbb{R}^{n}}
\Lambda^{\frac{\beta}{2}}(b\cdot\nabla
u)\cdot\Lambda^{\frac{\beta}{2}}b\mathrm{d}x.
\end{align}
The first term at the right hand side of \eqref{3.15} can be estimated as follows by the H\"{o}lder and Sobolev inequalities,
\begin{align}\label{3.16}
\int_{\mathbb{R}^{n}} \Lambda^{\frac{\beta}{2}}(u\cdot\nabla
b)\cdot\Lambda^{\frac{\beta}{2}}b\mathrm{d}x&\leq C\|u\cdot\nabla
b\|_{L^{2}}\|\Lambda^{\beta}b\|_{L^{2}}\notag\\
&\leq C\|u\|_{L^{\frac{2n}{n-2\alpha}}}\|\nabla b\|_{L^{\frac{n}{\alpha}}}\|\Lambda^{\beta}b\|_{L^{2}}\notag\\
&\leq
C\|\Lambda^{\alpha}u\|_{L^{2}}\|\Lambda^{\beta}b\|_{L^{2}}^{2}\notag\\
&\leq C\|\Lambda^{\alpha}u\|_{L^{2}}\|\Lambda^{\frac{\beta}{2}}b\|_{L^{2}}\|\Lambda^{\frac{3\beta}{2}}b\|_{L^{2}}\notag\\
&\leq
\frac{1}{4}\|\Lambda^{\frac{3\beta}{2}}b\|_{L^{2}}^{2}+C\|\Lambda^{\alpha}u\|_{L^{2}}^{2}\|\Lambda^{\frac{\beta}{2}}b\|_{L^{2}}^{2}.
\end{align}
The second term at the right hand side of \eqref{3.15} is similarly estimated as follows
\begin{align}\label{3.17}
\int_{\mathbb{R}^{n}} \Lambda^{\frac{\beta}{2}}(b\cdot\nabla
u)\cdot\Lambda^{\frac{\beta}{2}}b\mathrm{d}x&\leq C\|b\cdot\nabla
u\|_{L^{\frac{2n}{n+\beta}}}\|\Lambda^{\beta}b\|_{L^{\frac{2n}{n-\beta}}}\notag\\
&\leq C\|b\|_{L^{\frac{2n}{n-\beta}}}\|\nabla
u\|_{L^{\frac{n}{\beta}}}\|\Lambda^{\frac{3\beta}{2}}b\|_{L^{2}}\notag\\
&\leq
C\|\Lambda^{\frac{\beta}{2}}b\|_{L^{2}}\|\Lambda^{\alpha}u\|_{L^{2}}\|\Lambda^{\frac{3\beta}{2}}b\|_{L^{2}}\notag\\
&\leq
\frac{1}{4}\|\Lambda^{\frac{3\beta}{2}}b\|_{L^{2}}^{2}+C\|\Lambda^{\frac{\beta}{2}}b\|_{L^{2}}^{2}\|\Lambda^{\alpha}u\|_{L^{2}}^{2}.
\end{align} Then, inserting inequalities
\eqref{3.16}-\eqref{3.17} into \eqref{3.15}, we have
\begin{align*}
\frac{\mathrm{d}}{\mathrm{d}t}\|\Lambda^{\frac{\beta}{2}}b(t)\|_{L^{2}}^{2}+\|\Lambda^{\frac{3\beta}{2}}b\|_{L^{2}}^{2}\leq
C\|\Lambda^{\alpha}u\|_{L^{2}}^{2}\|\Lambda^{\frac{\beta}{2}}b\|_{L^{2}}^{2}.
\end{align*}
By the a priori estimate \eqref{3.2} and the Gr\"{o}nwall
Lemma \ref{lemma1}, we have
\begin{align}\label{3.18}
\|\Lambda^{\frac{\beta}{2}}b(t)\|_{L^{2}}^{2}+\int_{0}^{t}\|\Lambda^{\frac{3\beta}{2}}b(\tau)\|_{L^{2}}^{2}\mathrm{d}\tau\leq
C_{0}.
\end{align}
Moreover, we also have
\begin{align}\label{3.19}
\frac{\mathrm{d}}{\mathrm{d}t}\|\Lambda^{\frac{\beta}{2}}b(t)\|_{L^{2}}^{4}\leq
C\|\Lambda^{\alpha}u\|_{L^{2}}^{2}\|\Lambda^{\frac{\beta}{2}}b\|_{L^{2}}^{4}.
\end{align}
 Multiplying \eqref{3.19} by $\tilde{C}+1$, we get
\begin{align}\label{3.20}
\frac{\mathrm{d}}{\mathrm{d}t}((\tilde{C}+1)\|\Lambda^{\frac{\beta}{2}}b(t)\|_{L^{2}}^{4})\leq
(\tilde{C}+1)\|\Lambda^{\alpha}u\|_{L^{2}}^{2}\|\Lambda^{\frac{\beta}{2}}b\|_{L^{2}}^{4}.
\end{align}
Summing up the estimates \eqref{3.12} and \eqref{3.20}, we have
\begin{align}\label{3.21}
&\quad\frac{\mathrm{d}}{\mathrm{d}t}(\|(\Lambda^{\alpha}u,\Lambda^{\beta}b)(t)\|_{L^{2}}+M(t)+(\tilde{C}+1)\|\Lambda^{\frac{\beta}{2}}b(t)\|_{L^{2}}^{4})
+\|(\sqrt{\rho}\partial_{t}u,\partial_{t}b)\|_{L^{2}}^{2}\notag\\&\leq
C\|\Lambda^{\alpha} u\|_{L^{2}}^{2}(\|(\Lambda^{\alpha}
u,\Lambda^{\beta}b)\|_{L^{2}}^{2}+1+\|\Lambda^{\frac{\beta}{2}}b\|_{L^{2}}^{4}).
\end{align}
Next, using the lower bound \eqref{3.14} and integrating $\eqref{3.21}$ with respect to time $t$,
we obtain
\begin{align*}
&\quad\|(\Lambda^{\alpha}u,\Lambda^{\beta}b)(t)\|_{L^{2}}^{2}+\|\Lambda^{\frac{\beta}{2}}b(t)\|_{L^{2}}^{4}
+\int_{0}^{t}\|(\sqrt{\rho}\partial_{t}u,\partial_{t}b)(\tau)\|_{L^{2}}^{2}\mathrm{d}\tau\notag\\&\leq
C_{0}+\int_{0}^{t}C\|\Lambda^{\alpha}
u(\tau)\|_{L^{2}}^{2}(\|(\Lambda^{\alpha}u,\Lambda^{\beta}b)(\tau)\|_{L^{2}}^{2}+1+\|\Lambda^{\frac{\beta}{2}}b(\tau)\|_{L^{2}}^{4})\mathrm{d}\tau.
\end{align*}
Using the Gr\"{o}nwall Lemma \ref{lemma1} and the a priori estimate \eqref{3.2}, it yields
\begin{align}\label{3.22}
\|(\Lambda^{\alpha}u,\Lambda^{\beta}b)(t)\|_{L^{2}}^{2}+1+\|\Lambda^{\frac{\beta}{2}}b(t)\|_{L^{2}}^{4}
+\int_{0}^{t}\|(\sqrt{\rho}\partial_{t}u,\partial_{t}b)(\tau)\|_{L^{2}}^{2}\leq
C_{0}.
\end{align}
We can also obtain from estimate \eqref{3.21} by multiplying time $t$ that
\begin{align*}
&\quad\frac{\mathrm{d}}{\mathrm{d}t}(t\|(\Lambda^{\alpha}u,\Lambda^{\beta}b)\|_{L^{2}}^{2}+1+tM(t)+t(\tilde{C}+1)\|\Lambda^{\frac{\beta}{2}}b(t)\|_{L^{2}}^{4})
+t\|(\sqrt{\rho}\partial_{t}u,\partial_{t}b)\|_{L^{2}}^{2}\notag\\&\leq
\|(\Lambda^{\alpha}u,\Lambda^{\beta}b)\|_{L^{2}}^{2}+M(t)+(\tilde{C}+1)\|\Lambda^{\frac{\beta}{2}}b\|_{L^{2}}^{4}+Ct\|\Lambda^{\alpha} u\|_{L^{2}}^{2}(\|(\Lambda^{\alpha}u,\Lambda^{\beta}b)\|_{L^{2}}^{2}+1+\|\Lambda^{\frac{\beta}{2}}b\|_{L^{2}}^{4})\notag\\
&\leq
C\|(\Lambda^{\alpha}u,\Lambda^{\beta}b)\|_{L^{2}}^{2}+(\tilde{C}+1)\|\Lambda^{\frac{\beta}{2}}b\|_{L^{2}}^{4}+Ct\|\Lambda^{\alpha} u\|_{L^{2}}^{2}(\|(\Lambda^{\alpha}u,\Lambda^{\beta}b)\|_{L^{2}}^{2}+1+\|\Lambda^{\frac{\beta}{2}}b\|_{L^{2}}^{4}),
\end{align*}
where in the last line we have applied the estimate \eqref{3.13}. Integrating
the above inequality with respect to time $t$, we have
\begin{align*}
&\quad t\|(\Lambda^{\alpha}u,\Lambda^{\beta}b)(t)\|_{L^{2}}^{2}+t+tM(t)+t(\tilde{C}+1)\|\Lambda^{\frac{\beta}{2}}b(t)\|_{L^{2}}^{4}+\int_{0}^{t}\tau\|(\sqrt{\rho}\partial_{\tau}u,\partial_{\tau}b)(\tau)\|_{L^{2}}^{2}\mathrm{d}\tau\notag\\&\leq
C_{0}+C\int_{0}^{t}\tau\|\Lambda^{\alpha}
u(\tau)\|_{L^{2}}^{2}(\|(\Lambda^{\alpha}u,\Lambda^{\beta}b)(\tau)\|_{L^{2}}^{2}+1+\|\Lambda^{\frac{\beta}{2}}b(\tau)\|_{L^{2}}^{4})\mathrm{d}\tau,
\end{align*}
where we used a Sobolev interpolation inequality,
$$\int_{0}^{t}\|\Lambda^{\frac{\beta}{2}}b(\tau)\|_{L^{2}}^{4}\mathrm{d}\tau\leq\int_{0}^{t}\|b(\tau)\|_{L^{2}}^{2}\|\Lambda^{\beta}b(\tau)\|_{L^{2}}^{2}\mathrm{d}\tau\leq
C_{0}.$$
Applying of the lower bound \eqref{3.14}, we can get
\begin{align*}
&\quad t\|(\Lambda^{\alpha}u,\Lambda^{\beta}b)(t)\|_{L^{2}}^{2}+t+t\|\Lambda^{\frac{\beta}{2}}b(t)\|_{L^{2}}^{4}+\int_{0}^{t}\tau\|(\sqrt{\rho}\partial_{\tau}u,\partial_{\tau}b)(\tau)\|_{L^{2}}^{2}\mathrm{d}\tau\notag\\&\leq
C_{0}+C\int_{0}^{t}\tau\|\Lambda^{\alpha}
u(\tau)\|_{L^{2}}^{2}(\|(\Lambda^{\alpha}u,\Lambda^{\beta}b)(\tau)\|_{L^{2}}^{2}+1+\|\Lambda^{\frac{\beta}{2}}b(\tau)\|_{L^{2}}^{4})\mathrm{d}\tau.
\end{align*}
The Gr\"{o}nwall Lemma \ref{lemma1} and the a priori estimate \eqref{3.2} allow us to deduce
\begin{align}\label{3.23}
t(\|(\Lambda^{\alpha}u,\Lambda^{\beta}b)(t)\|_{L^{2}}^{2}+1+\|\Lambda^{\frac{\beta}{2}}b(t)\|_{L^{2}}^{4})+\int_{0}^{t}\tau\|(\sqrt{\rho}\partial_{\tau}u,\partial_{\tau}b)(\tau)\|_{L^{2}}^{2}\mathrm{d}\tau\leq C_{0}.
\end{align}
Let us recall the Stokes equations:
\begin{equation}\label{3.24}
\begin{cases}
(-\Delta)^{\alpha}u+\nabla P =(b\cdot\nabla) b-\rho\partial_{t}u -(\rho u\cdot\nabla) u, \\
\nabla\cdot u=0.
\end{cases}
\end{equation}
It is easy to deduce from \eqref{3.24} that
\begin{align}\label{u2alpha}
\|\Lambda^{2\alpha}u\|_{L^{2}}&\leq C\|b\cdot\nabla
b\|_{L^{2}}+C\|\rho\partial_{t}u\|_{L^{2}}+\|\rho u\cdot\nabla
u\|_{L^{2}} \\ \notag
&\leq C\|b\|_{L^{\frac{2n}{n-2\beta}}}\|\nabla
b\|_{L^{\frac{n}{\beta}}}+C\|\sqrt{\rho}\|_{L^{\infty}}\|\sqrt{\rho}\partial_{t}u\|_{L^{2}}+C\|\rho\|_{L^{\infty}}\|u\|_{L^{\frac{2n}{n-2\alpha}}}\|\nabla
u\|_{L^{\frac{n}{\alpha}}}\\ \notag
&\leq
C\|\Lambda^{\beta}b\|_{L^{2}}\|\Lambda^{\alpha}b\|_{L^{2}}+C\|\sqrt{\rho}\partial_{t}u\|_{L^{2}}+C\|\Lambda^{\alpha}u\|_{L^{2}}\|\Lambda^{\beta}
u\|_{L^{2}}\\ \notag
&\leq
C\|\Lambda^{\beta}b\|_{L^{2}}\|b\|_{L^{2}}^{1-\frac{2\alpha}{3\beta}}\|\Lambda^{\frac{3\beta}{2}}b\|_{L^{2}}^{\frac{2\alpha}{3\beta}}+C\|\sqrt{\rho}\partial_{t}u\|_{L^{2}}+C\|\Lambda^{\alpha}u\|_{L^{2}}\|\Lambda^{\alpha}
u\|_{L^{2}}^{\frac{\beta}{\alpha}}\|u\|_{L^{2}}^{1-\frac{\beta}{\alpha}}\\ \notag
&\leq
C\|\Lambda^{\beta}b\|_{L^{2}}(\|b\|_{L^{2}}+\|\Lambda^{\frac{3\beta}{2}}b\|_{L^{2}})+C\|\sqrt{\rho}\partial_{t}u\|_{L^{2}}+C\|\Lambda^{\alpha}u\|_{L^{2}}(\|\Lambda^{\alpha}u\|_{L^{2}}+\|u\|_{L^{2}}),
\end{align}
which implies
\begin{align*}
\int_{0}^{t}\tau\|\Lambda^{2\alpha}u(\tau)\|_{L^{2}}^{2}\mathrm{d}\tau
&\leq
C\int_{0}^{t}\tau\|\Lambda^{\beta}b(\tau)\|_{L^{2}}^{2}(\|b(\tau)\|_{L^{2}}^{2}+\|\Lambda^{\frac{3\beta}{2}}b(\tau)\|_{L^{2}}^{2})\mathrm{d}\tau+C\int_{0}^{t}\tau\|\sqrt{\rho}\partial_{t}u\|_{L^{2}}^{2}\mathrm{d}\tau\notag\\
&\qquad+C\int_{0}^{t}\tau\|\Lambda^{\alpha}u(\tau)\|_{L^{2}}^{2}(\|\Lambda^{\alpha}u(\tau)\|_{L^{2}}^{2}+\|u(\tau)\|_{L^{2}}^{2})\mathrm{d}\tau\notag\\
&\leq
C_{0}{t}+\int_{0}^{t}\tau\|\Lambda^{\beta}b(\tau)\|_{L^{2}}^{2}\|\Lambda^{\frac{3\beta}{2}}b(\tau)\|_{L^{2}}^{2}\mathrm{d}\tau\notag\\
&\leq C_{0}(t)+C_{0}\int_{0}^{t}\|\Lambda^{\frac{3\beta}{2}}b(\tau)\|_{L^{2}}^{2}\mathrm{d}\tau\notag\\
&\leq C_{0}(t),
\end{align*}
where we have used the a priori estimate \eqref{3.2} and the inequalities \eqref{3.22}- \eqref{3.23}.
Similarly, we can get from  the equation $\eqref{1.1}_{3}$ that
\begin{align*}
\|\Lambda^{2\beta}b\|_{L^{2}}&\leq\|\partial_{t}b\|_{L^{2}}+\|b\cdot\nabla
u\|_{L^{2}}+\|u\cdot\nabla b\|_{L^{2}}\notag\\
&\leq
C\|\partial_{t}b\|_{L^{2}}+C\|b\|_{L^{\frac{2n}{n-2\beta}}}\|\nabla
u\|_{L^{\frac{n}{\beta}}}+C\|u\|_{L^{\frac{2n}{n-2\alpha}}}\|\nabla
b\|_{L^{\frac{n}{\alpha}}}\notag\\
&\leq
C\|\partial_{t}b\|_{L^{2}}+C\|\Lambda^{\beta}b\|_{L^{2}}\|\Lambda^{\alpha}
u\|_{L^{2}}+C\|\Lambda^{\alpha}u\|_{L^{2}}\|\Lambda^{\beta}
b\|_{L^{2}}\notag\\
&\leq
C\|\partial_{t}b\|_{L^{2}}+C\|\Lambda^{\beta}b\|_{L^{2}}^{2}+C\|\Lambda^{\alpha}
u\|_{L^{2}}^{2},
\end{align*}
then we get $$\int_{0}^{t}\tau\|\Lambda^{2\beta}b(\tau)\|_{L^{2}}^{2}\mathrm{d}\tau\leq
C_{0}(t).$$
Clearly, we also have
$$\int_{0}^{t}\|(\Lambda^{2\alpha}u,\Lambda^{2\beta}b)(\tau)\|_{L^{2}}^{2}\leq
C_{0}.$$
We thus complete the proof of Lemma \ref{lemma3.2}.
\end{proof}
The following Lemma \ref{lemma3.3} is used for the estimation of high
order derivative estimates for $(u,b)$ .


\begin{lemma}\label{lemma3.3}
Under the assumptions of Theorem \ref{thm1}, the corresponding
solution $(\rho,u,b)$ of the equations \eqref{1.1} admits the
following a priori bounds for any $t>0$
\begin{align}\label{3.25}
t^{k}\|(\sqrt{\rho}\partial_{t}u,\partial_{t}b)(t)\|_{L^{2}}^{2}+\int_{0}^{t}\tau^{k}\|(\Lambda^{\alpha}\partial_{\tau}u,\Lambda^{\alpha}\partial_{\tau}b)(\tau)\|_{L^{2}}^{2}\leq
C_{0}(t^{k}),k=0,1,
\end{align}
\begin{align}\label{3.26}
\int_{0}^{t}\|\nabla P\|_{L^{2}}^{2}\mathrm{d}\tau\leq C_{0},
\end{align}
where $C_{0}$ depends only on
$\|\rho_0\|_{L^{\infty}}$, $\|\sqrt{\rho_{0}}u_{0}\|_{L^{2}}$, $\|\Lambda^{\alpha}u_{0}\|_{L^{2}}$ and $\|b_{0}\|_{H^{\beta}}$.
\end{lemma}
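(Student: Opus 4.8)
The plan is to obtain \eqref{3.25} by differentiating the equations in time and running a time-weighted energy estimate, and \eqref{3.26} directly from the Stokes system \eqref{3.24}. First I would rewrite $\eqref{1.1}_2$ in the nonconservative form
$$\rho\,\partial_t u+\rho(u\cdot\nabla)u+(-\Delta)^\alpha u+\nabla P=(b\cdot\nabla)b$$
(using $\eqref{1.1}_1$ and $\nabla\cdot u=0$), differentiate this identity and $\eqref{1.1}_3$ in $t$, test the former with $\partial_t u$ and the latter with $\partial_t b$, and add. Using $\partial_t\rho=-\nabla\cdot(\rho u)$, the identity $\frac{d}{dt}\int_{\mathbb{R}^n}\tfrac12\rho|\partial_t u|^2\,dx=\int_{\mathbb{R}^n}\rho\,\partial_{tt}u\cdot\partial_t u\,dx+\tfrac12\int_{\mathbb{R}^n}\partial_t\rho\,|\partial_t u|^2\,dx$, $\int_{\mathbb{R}^n}\nabla\partial_t P\cdot\partial_t u\,dx=0$ (incompressibility), and the cancellation $\int_{\mathbb{R}^n}(b\cdot\nabla)\partial_t b\cdot\partial_t u\,dx+\int_{\mathbb{R}^n}(b\cdot\nabla)\partial_t u\cdot\partial_t b\,dx=0$ coming from $\nabla\cdot b=0$ (which removes the only terms containing $\nabla\partial_t u$ or $\nabla\partial_t b$), one obtains an identity
$$\frac12\frac{d}{dt}\Big(\|\sqrt\rho\,\partial_t u\|_{L^2}^2+\|\partial_t b\|_{L^2}^2\Big)+\|\Lambda^\alpha\partial_t u\|_{L^2}^2+\|\Lambda^\beta\partial_t b\|_{L^2}^2=\sum_{i=1}^{5}J_i ,$$
where the $J_i$ are $-\tfrac12\int\partial_t\rho\,|\partial_t u|^2$ and the remaining terms produced by $\partial_t(\rho u\cdot\nabla u)$, $\partial_t(b\cdot\nabla b)$, $\partial_t(u\cdot\nabla b)$ and $\partial_t(b\cdot\nabla u)$.

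Next I would estimate each $J_i$. Every occurrence of $\partial_t\rho$ is replaced by $-\nabla\cdot(\rho u)$ and integrated by parts, so that no derivative of $\rho$ is ever needed; since $\alpha\ge\beta\ge1$ the quantity $\Lambda^{2\alpha}u$ controls $\nabla^2u$, which makes these integrations by parts affordable, and $c_0\le\rho\le\|\rho_0\|_{L^\infty}$ lets one pass freely between $\partial_t u$ and $\sqrt\rho\,\partial_t u$. Each resulting integral is controlled by Hölder's inequality, the Sobolev embeddings $\dot H^\alpha\hookrightarrow L^{2n/(n-2\alpha)}$, $\dot H^\beta\hookrightarrow L^{2n/(n-2\beta)}$, $\dot H^\beta\hookrightarrow\dot W^{1,n/\alpha}$ and $\dot H^\alpha\hookrightarrow\dot W^{1,n/\beta}$ (all valid because $\alpha+\beta=1+\tfrac n2$ and $1\le\beta\le\alpha\le\tfrac n2$), Gagliardo--Nirenberg interpolation, and the a priori estimates \eqref{3.1}--\eqref{3.5}, \eqref{3.18} and \eqref{3.22}--\eqref{3.23}. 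The target is
$$\sum_{i=1}^5 J_i\le\frac12\|\Lambda^\alpha\partial_t u\|_{L^2}^2+\frac12\|\Lambda^\beta\partial_t b\|_{L^2}^2+g(t)\Big(\|\sqrt\rho\,\partial_t u\|_{L^2}^2+\|\partial_t b\|_{L^2}^2\Big)+h(t),$$
with $g,h$ dominated by sums of $\|\Lambda^\alpha u\|_{L^2}^2$, $\|\Lambda^{2\alpha}u\|_{L^2}^2$, $\|\Lambda^{2\beta}b\|_{L^2}^2$ and $\|\Lambda^{3\beta/2}b\|_{L^2}^2$, hence locally integrable in $t$ by Lemma \ref{lemma3.2}. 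Absorbing the dissipative terms on the left then leaves a differential inequality of the type handled by the Gr\"onwall Lemma \ref{lemma1}.

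To conclude, I would multiply the inequality by $t^k$. When $k=1$ this produces the extra right-hand term $\|\sqrt\rho\,\partial_t u\|_{L^2}^2+\|\partial_t b\|_{L^2}^2$, which is time-integrable by \eqref{3.4}; integrating over $[0,t]$ and applying Lemma \ref{lemma1} with \eqref{3.2} gives \eqref{3.25} for $k=1$, and since $t\,\|(\sqrt\rho\,\partial_t u,\partial_t b)(t)\|_{L^2}^2\to0$ as $t\to0$, no compatibility condition on $(\rho_0,u_0,b_0)$ is needed. For $k=0$ the same inequality is integrated with no weight, the bound $\sqrt\rho\,\partial_t u,\partial_t b\in L^2_{\mathrm{loc}}([0,T);L^2)$ from \eqref{3.4} making the right-hand side finite. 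Finally, \eqref{3.26} follows from \eqref{3.24}: applying $\nabla(-\Delta)^{-1}\nabla\cdot$ gives
$$\|\nabla P\|_{L^2}\le C\big(\|b\cdot\nabla b\|_{L^2}+\|\rho\,\partial_t u\|_{L^2}+\|\rho u\cdot\nabla u\|_{L^2}\big),$$
and after squaring, integrating in $t$, and using Hölder, the embeddings above, Lemma \ref{lemma3.1} and \eqref{3.2}--\eqref{3.4} (in particular $\int_0^t\|\sqrt\rho\,\partial_\tau u\|_{L^2}^2\le C_0$, $\int_0^t\|\Lambda^\alpha u\|_{L^2}^2\le C_0$, and $\int_0^t\|\Lambda^\alpha b\|_{L^2}^2\le C_0$ by interpolating \eqref{3.2} and \eqref{3.18}), each of the three contributions is $\le C_0$.

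The main obstacle is the estimation of the terms in $\partial_t(\rho u\cdot\nabla u)\cdot\partial_t u$ together with $-\tfrac12\int\partial_t\rho\,|\partial_t u|^2$: there is no directly usable bound on $\nabla\rho$, so these must be rewritten entirely through $\partial_t\rho=-\nabla\cdot(\rho u)$ and handled by transferring derivatives onto $u$, $\partial_t u$ and $b$, after which one must verify that every exponent arising in the product and interpolation inequalities lies in the admissible range. This is exactly where the structural restrictions are consumed: $\alpha+\beta=1+\tfrac n2$ fixes the Sobolev exponents above, $\alpha\le\tfrac n2$ (i.e. $\beta\ge1$) keeps those embeddings subcritical, $\alpha\le1+\tfrac n4$ (i.e. $\beta\ge\tfrac n4$) makes $\Lambda^{\alpha-1}\partial_t b$ controllable by interpolation between $\partial_t b$ and $\Lambda^\beta\partial_t b$, $\alpha\le\tfrac{3\beta}{2}$ is what yields $\int_0^t\|\Lambda^\alpha b\|_{L^2}^2\le C_0$, and $\beta\le\alpha$ is used repeatedly to interpolate $\Lambda^\beta$-norms between $L^2$ and $\dot H^\alpha$. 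A secondary difficulty is the absence of any a priori $\partial_t$-bound at $t=0$, which is what forces the time weight in the $k=1$ estimate.
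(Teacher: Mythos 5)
Your proposal follows essentially the same route as the paper's proof: differentiate the momentum and induction equations in time, test with $\partial_{t}u$ and $\partial_{t}b$, eliminate $\partial_{t}\rho$ through the continuity equation and integration by parts so that no derivative of $\rho$ appears, close the resulting differential inequality with the Gr\"onwall lemma and the bounds of Lemmas \ref{lemma3.1}--\ref{lemma3.2}, insert the weight $t$ for the $k=1$ case, and read $\|\nabla P\|_{L^{2}}$ off the Stokes system \eqref{3.24}. The only cosmetic difference is that you cancel the pair $\int(b\cdot\nabla)\partial_{t}b\cdot\partial_{t}u\,\mathrm{d}x+\int(b\cdot\nabla)\partial_{t}u\cdot\partial_{t}b\,\mathrm{d}x$ exactly using $\nabla\cdot b=0$, whereas the paper integrates by parts and estimates these contributions separately; both versions lead to the same closed inequality.
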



\begin{proof}
Differentiating the equation $\eqref{1.1}_{2}$ with respect to $t$ gives
\begin{align}\label{3.27}
\rho\partial_{tt}u+\rho
u\cdot\nabla\partial_{t}u+(-\Delta)^{\alpha}\partial_{t}u+\nabla\partial_{t}P=-\partial_{t}\rho\partial_{t}u-\partial_{t}(\rho
u)\cdot\nabla u+\partial_{t}(b\cdot\nabla b).
\end{align}
Multiplying $\partial_{t}u$ on the both sides of \eqref{3.27}  and integrating  by parts, we obtain after using the equation
$\eqref{1.1}_{1}$ that
\begin{align}\label{3.28}
\frac{\mathrm{d}}{\mathrm{d}t}\|\sqrt{\rho}\partial_{t}u\|_{L^{2}}^{2}+\|\Lambda^{\alpha}\partial_{t}u\|_{L^{2}}^{2}
&=-\int_{\mathbb{R}^{n}}\partial_{t}\rho\cdot\partial_{t}u\cdot\partial_{t}u\mathrm{d}x
-\int_{\mathbb{R}^{n}}\partial_{t}\rho u\cdot\nabla
u\cdot\partial_{t}u\mathrm{d}x\notag\\
&\qquad-\int_{\mathbb{R}^{n}}\rho\partial_{t}u\cdot\nabla
u\cdot\partial_{t}u\mathrm{d}x+\int_{\mathbb{R}^{n}}\partial_{t}(b\cdot\nabla
b)\cdot\partial_{t}u\mathrm{d}x\notag\\
&=-2\int_{\mathbb{R}^{n}}\rho
u\cdot\nabla\partial_{t}u\cdot\partial_{t}u\mathrm{d}x
-\int_{\mathbb{R}^{n}}\rho u\nabla( u\cdot\nabla
u\cdot\partial_{t}u)\mathrm{d}x\notag\\
&\qquad-\int_{\mathbb{R}^{n}}\rho\partial_{t}u\cdot\nabla
u\cdot\partial_{t}u\mathrm{d}x+\int_{\mathbb{R}^{n}}\partial_{t}(b\cdot\nabla
b)\cdot\partial_{t}u\mathrm{d}x\notag\\
&\triangleq\sum_{i=1}^{4}J_{i}.
\end{align}
Next, we estimate the terms on the right hand side one by one. Using  H\"older, Gagliardo-Nirenberg and
Young's inequalities, we get
\begin{align*}
\mid
J_{1}\mid&\leq C\|\sqrt{\rho}\|_{L^{\infty}}\|\sqrt{\rho}\partial_{t}u\|_{L^{2}}\|\nabla\partial_{t}u\|_{L^{\frac{n}{\beta}}}\|u\|_{L^{\frac{2n}{n-2\beta}}}\notag\\
&\leq C\|\rho_{0}\|_{L^{\infty}}^{\frac{1}{2}}\|\sqrt{\rho}\partial_{t}u\|_{L^{2}}\|\Lambda^{\alpha}\partial_{t}u\|_{L^{2}}\|\Lambda^{\beta}u\|_{L^{2}}\notag\\
&\leq C\|\sqrt{\rho}\partial_{t}u\|_{L^{2}}\|\Lambda^{\alpha}\partial_{t}u\|_{L^{2}}\|u\|_{L^{2}}^{1-\frac{\beta}{\alpha}}\|\Lambda^{\alpha}u\|_{L^{2}}^{\frac{\beta}{\alpha}}\notag\\
&\leq C\|\sqrt{\rho}\partial_{t}u\|_{L^{2}}\|\Lambda^{\alpha}\partial_{t}u\|_{L^{2}}(\|u\|_{L^{2}}+\|\Lambda^{\alpha}u\|_{L^{2}})\notag\\
&\leq
\frac{1}{16}\|\Lambda^{\alpha}\partial_{t}u\|_{L^{2}}^{2}+C\|\sqrt{\rho}\partial_{t}u\|_{L^{2}}^{2}(\|u\|_{L^{2}}^{2}+\|\Lambda^{\alpha}u\|_{L^{2}}^{2}).
\end{align*}
\begin{align*}
\mid J_{2}\mid&\leq \mid\int_{\mathbb{R}^{n}}\rho u\cdot\nabla
u\cdot\nabla
u\cdot\partial_{t}u\mathrm{d}x\mid+\mid\int_{\mathbb{R}^{n}}\rho
u\cdot u\cdot\nabla^{2}
u\cdot\partial_{t}u\mathrm{d}x\mid+\mid\int_{\mathbb{R}^{n}}\rho
u\cdot u\cdot\nabla u\cdot\nabla\partial_{t}u\mathrm{d}x\mid\\
&\leq
C\|\rho\|_{L^{\infty}}\|u\|_{L^{\frac{2n}{n-2\alpha}}}\|\nabla
u\|_{L^{\frac{n}{\alpha}}}^{2}\|\partial_{t}u\|_{L^{\frac{2n}{n-2\alpha}}}+C\|\sqrt{\rho}\|_{L^{\infty}}\|\sqrt{\rho}\partial_{t}u\|_{L^{2}}\|
u\|_{L^{\frac{n}{\alpha-1}}}^{2}\|\nabla^{2}u\|_{L^{\frac{2n}{4+n-4\alpha}}}\\&\qquad+C\|\rho\|_{L^{\infty}}\|u\|_{L^{\frac{2n}{n-2\beta}}}^{2}\|\nabla
u\|_{L^{\frac{n}{\beta}}}\|\nabla\partial_{t}u\|_{L^{\frac{n}{\beta}}}\\
&\leq
C\|\rho_{0}\|_{L^{\infty}}\|\Lambda^{\alpha}u\|_{L^{2}}\|\Lambda^{\beta}
u\|_{L^{2}}^{2}\|\Lambda^{\alpha}\partial_{t}u\|_{L^{2}}+C\|\rho_{0}\|_{L^{\infty}}^{\frac{1}{2}}\|\sqrt{\rho}\partial_{t}u\|_{L^{2}}\|\Lambda^{\beta}
u\|_{L^{2}}^{2}\|\Lambda^{2\alpha}u\|_{L^{2}}\\&\qquad+C\|\rho_{0}\|_{L^{\infty}}\|\Lambda^{\beta}u\|_{L^{2}}^{2}\|\Lambda^{\alpha}
u\|_{L^{2}}\|\Lambda^{\alpha}\partial_{t}u\|_{L^{2}}\\
&\leq
C\|\Lambda^{\alpha}u\|_{L^{2}}(\|\Lambda^{\alpha}u\|_{L^{2}}^{2}+\|u\|_{L^{2}}^{2})\|\Lambda^{\alpha}\partial_{t}u\|_{L^{2}}+C\|\sqrt{\rho}\partial_{t}u\|_{L^{2}}(\|\Lambda^{\alpha}
u\|_{L^{2}}^{2}+\|u\|_{L^{2}}^{2})\|\Lambda^{2\alpha}u\|_{L^{2}}\\
&\leq\frac{1}{16}\|\Lambda^{\alpha}\partial_{t}u\|_{L^{2}}^{2}+C\|(u,\Lambda^{\alpha}u)\|_{L^{2}}^{4}\|(\Lambda^{\alpha}u,\Lambda^{\beta}b)\|_{L^{2}}^{2}+C\|\sqrt{\rho}\partial_{t}u\|_{L^{2}}^{2}\|(u,b,\Lambda^{\alpha}u,\Lambda^{\frac{3}{2}\beta}b)\|_{L^{2}}^{2},
\end{align*}
where we have applied the following estimate
$$\|\Lambda^{2\alpha}u\|_{L^{2}}\leq\|\Lambda^{\beta}b\|_{L^{2}}(\|b\|_{L^{2}}+\|\Lambda^{\frac{3\beta}{2}}b\|_{L^{2}})+\|\sqrt{\rho}\partial_{t}u\|_{L^{2}}+\|\Lambda^{\alpha}u\|_{L^{2}}(\|\Lambda^{\alpha}u\|_{L^{2}}+\|u\|_{L^{2}}),$$
which is proved in the estimate \eqref{u2alpha}.

The term $J_3$ can be similarly estimated as follows
\begin{align*}
\mid J_{3}\mid&\leq
C\|\sqrt{\rho}\|_{L^{\infty}}\|\sqrt{\rho}\partial_{t}u\|_{L^{2}}\|\nabla
u\|_{L^{\frac{n}{\alpha}}}\|\partial_{t}u\|_{L^{\frac{2n}{n-2\alpha}}}\\
&\leq
C\|\rho_{0}\|_{L^{\infty}}^{\frac{1}{2}}\|\sqrt{\rho}\partial_{t}u\|_{L^{2}}\|\Lambda^{\alpha}\partial_{t}u\|_{L^{2}}\|\Lambda^{\beta}u\|_{L^{2}}\\
&\leq
C\|\sqrt{\rho}\partial_{t}u\|_{L^{2}}\|\Lambda^{\alpha}\partial_{t}u\|_{L^{2}}\|u\|_{L^{2}}^{1-\frac{\beta}{\alpha}}\|\Lambda^{\alpha}u\|_{L^{2}}^{\frac{\beta}{\alpha}}\\
&\leq
\frac{1}{16}\|\Lambda^{\alpha}\partial_{t}u\|_{L^{2}}^{2}+C\|\sqrt{\rho}\partial_{t}u\|_{L^{2}}^{2}(\|u\|_{L^{2}}^{2}+\|\Lambda^{\alpha}u\|_{L^{2}}^{2}).
\end{align*}
For the last term $J_4$ we can get that by integration by parts and a similar argument
 \begin{align*} \mid
J_{4}\mid&=\mid\int_{\mathbb{R}^{n}}\partial_{t}(b_{i}\partial_{i}b_{j})\partial_{t}u_{j}\mathbb{d}x\mid\\
&=\mid-\int_{\mathbb{R}^{n}}\partial_{t}(b_{i}b_{j})\partial_{t}\partial_{i}u_{j}\mathbb{d}x\mid\\
&\leq
C\|\partial_{t}b\|_{L^{2}}\|b\|_{L^{\frac{2n}{n-2\beta}}}\|\partial_{t}\nabla
u\|_{L^{\frac{n}{\beta}}}\\
&\leq
C\|\partial_{t}b\|_{L^{2}}\|\Lambda^{\beta}b\|_{L^{2}}\|\Lambda^{\alpha}\partial_{t}u\|_{L^{2}}\\
&\leq\frac{1}{16}\|\Lambda^{\alpha}\partial_{t}u\|_{L^{2}}^{2}+C\|\partial_{t}b\|_{L^{2}}^{2}\|\Lambda^{\beta}b\|_{L^{2}}^{2}.
\end{align*}
Inserting all the above estimates into the equality \eqref{3.28} it follows that
\begin{align}\label{3.29}
\frac{\mathrm{d}}{\mathrm{d}t}\|\sqrt{\rho}\partial_{t}u\|_{L^{2}}^{2}+\|\Lambda^{\alpha}\partial_{t}u\|_{L^{2}}^{2}\leq&\|\sqrt{\rho}\partial_{t}u\|_{L^{2}}^{2}\|(u,b,\Lambda^{\alpha}u,\Lambda^{\frac{3}{2}\beta}b)\|_{L^{2}}^{2}+C\|\partial_{t}b\|_{L^{2}}^{2}\|\Lambda^{\beta}b\|_{L^{2}}^{2}
\notag\\&+C\|(u,\Lambda^{\alpha}u)\|_{L^{2}}^{4}\|(\Lambda^{\alpha}u,\Lambda^{\beta}b)\|_{L^{2}}^{2}.
\end{align}
Moreover, differentiating the equation $\eqref{1.1}_{3}$ with respect to $t$
shows
$$\partial_{tt}b+(u\cdot\nabla)\partial_{t}b+(-\Delta)^{\beta}\partial_{t}b=\partial_{t}(b\cdot\nabla
u)-\partial_{t}u\cdot\nabla b.$$
 Multiplying both sides of the above equation by $\partial_{t}b$
and integrating the resulting equality by parts and using a similar argument as deducing the estimate (\ref{3.29}) we obtain
\begin{align*}
\frac{1}{2}\frac{\mathrm{d}}{\mathrm{d}t}\|\partial_{t}b\|_{L^{2}}^{2}+\|\Lambda^{\beta}\partial_{t}b\|_{L^{2}}^{2}&=\int_{\mathbb{R}^{n}}\partial_{t}(b\cdot\nabla
u)\cdot\partial_{t}b\mathrm{d}x-\int_{\mathbb{R}^{n}}\partial_{t}u\cdot\nabla
b\cdot\partial_{t}b\mathrm{d}x\\
&=\int_{\mathbb{R}^{n}}\partial_{t}b\cdot\nabla
u\cdot\partial_{t}b\mathrm{d}x+\int_{\mathbb{R}^{n}}b\cdot\nabla\partial_{t}
u\cdot\partial_{t}b\mathrm{d}x-\int_{\mathbb{R}^{n}}\partial_{t}u\cdot\nabla
b\cdot\partial_{t}b\mathrm{d}x\\
&\leq\|\partial_{t}b\|_{L^{2}}\|\nabla
u\|_{L^{\frac{n}{\beta}}}\|\partial_{t}b\|_{L^{\frac{2n}{n-2\beta}}}+\|b\|_{L^{\frac{2n}{n-2\beta}}}\|\nabla\partial_{t}u\|_{L^{\frac{n}{\beta}}}\|\partial_{t}b\|_{L^{2}}\\
&\qquad+\|\partial_{t}b\|_{L^{2}}\|\partial_{t}u\|_{L^{\frac{2n}{n-2\alpha}}}\|\nabla
b\|_{L^{\frac{n}{\alpha}}}\\
&\leq\frac{1}{16}\|\Lambda^{\alpha}\partial_{t}u\|_{L^{2}}^{2}+\frac{1}{2}\|\Lambda^{\beta}\partial_{t}b\|_{L^{2}}^{2}+\|\partial_{t}b\|_{L^{2}}^{2}(\|\Lambda^{\alpha}u\|_{L^{2}}^{2}+\|\Lambda^{\beta}b\|_{L^{2}}^{2}).
\end{align*}
Summing up the above estimate with the estimate \eqref{3.29} and using $c_{0}\|u\|_{L^{2}}\leq\|\sqrt{\rho}u\|_{L^{2}}$, we get
\begin{align}\label{3.30}
&\quad\frac{\mathrm{d}}{\mathrm{d}t}\|(\sqrt{\rho}\partial_{t}u,\partial_{t}b)\|_{L^{2}}^{2}+\|(\Lambda^{\alpha}\partial_{t}u,\Lambda^{\beta}\partial_{t}b)\|_{L^{2}}^{2}\notag\\
&\leq
C\|(\sqrt{\rho}\partial_{t}u,\partial_{t}b)\|_{L^{2}}^{2}\|(\sqrt{\rho}u,b,\Lambda^{\alpha}u,\Lambda^{\frac{3}{2}\beta}b)\|_{L^{2}}^{2}+\|(\sqrt{\rho}u,\Lambda^{\alpha}u)\|_{L^{2}}^{4}\|(\Lambda^{\alpha}u,\Lambda^{\beta}b)\|_{L^{2}}^{2}.
\end{align}
Using the Gr\"{o}nwall Lemma \ref{lemma1} and the a priori estimates \eqref{3.2}, \eqref{3.4}, it
yields
$$\|(\sqrt{\rho}\partial_{t}u,\partial_{t}b)(t)\|_{L^{2}}^{2}+\int_{0}^{t}\|(\Lambda^{\alpha}\partial_{\tau}u,\Lambda^{\beta}\partial_{\tau}b)(\tau)\|_{L^{2}}^{2}\leq
C_{0}(t).$$
In addition, we can get from the estimate \eqref{3.30} by multiplying time $t$ that
\begin{align*}
&\quad\frac{\mathrm{d}}{\mathrm{d}t}(t\|(\sqrt{\rho}\partial_{t}u,\partial_{t}b)\|_{L^{2}}^{2})+t\|(\Lambda^{\alpha}\partial_{t}u,\Lambda^{\beta}\partial_{t}b)\|_{L^{2}}^{2}\notag\\
&\leq\|(\sqrt{\rho}\partial_{t}u,\partial_{t}b)\|_{L^{2}}^{2}+
Ct\|(\sqrt{\rho}\partial_{t}u,\partial_{t}b)\|_{L^{2}}^{2}\|(\sqrt{\rho}u,b,\Lambda^{\alpha}u,\Lambda^{\frac{3}{2}\beta}b)\|_{L^{2}}^{2}\\&\qquad+\|(\sqrt{\rho}u,\Lambda^{\alpha}u)\|_{L^{2}}^{4}t\|(\Lambda^{\alpha}u,\Lambda^{\beta}b)\|_{L^{2}}^{2}.
\end{align*}
By  Lemma \ref{3.2} and  the Gr\"{o}nwall Lemma \ref{lemma1}, we have
\begin{align*}
t\|\sqrt{\rho}\partial_{t}u\|_{L^{2}}^{2}+t\|\partial_{t}b\|_{L^{2}}^{2}+\int_{0}^{t}\tau\|\Lambda^{\alpha}\partial_{\tau}u\|_{L^{2}}^{2}+\tau\|\Lambda^{\beta}\partial_{\tau}b\|_{L^{2}}^{2}\mathrm{d}\tau
\leq C_{0}.
\end{align*}
It follows from the Stokes equations (\ref{3.24}) that
\begin{align*}
\|\nabla P\|_{L^{2}}&\leq C\|\rho\partial_{t}u\|_{L^{2}}+C\|\rho
u\cdot\nabla u\|_{L^{2}}+C\|b\cdot\nabla b\|_{L^{2}}\\
&\leq
C\|\sqrt{\rho}\partial_{t}u\|_{L^{2}}+C\|\Lambda^{\alpha}u\|_{L^{2}}(\|\Lambda^{\alpha}u\|_{L^{2}}+\|u\|_{L^{2}})+C\|\Lambda^{\beta}b\|_{L^{2}}(\|\Lambda^{\frac{3}{2}\beta}b\|_{L^{2}}+\|b\|_{L^{2}}).
\end{align*}
We thus deduce by the a priori estimates \eqref{3.2}, \eqref{3.4} and the inequality \eqref{3.19} that
$$\int_{0}^{t}\|\nabla P\|_{L^{2}}^{2}\mathrm{d}\tau\leq C_{0}.$$
We thus conclude the proof of Lemma \ref{3.3}.
\end{proof}
The following Lemma \ref{3.4}  plays a key role in proving the
uniqueness of a solution.


\begin{lemma}\label{lemma3.4}
Under the assumptions of Theorem \ref{thm1}, the corresponding
solution $(\rho,u,b)$ of the equations \eqref{1.1} admits the
following bounds for any $t>0$
\begin{align}\label{3.31}
\int_{0}^{t}\|\nabla u(\tau)\|_{L^{\infty}}\mathrm{d}\tau\leq
C_{0}(t),
\end{align}
\begin{align}\label{3.32}
\|\nabla\rho(t)\|_{L^{\frac{2n}{6\alpha-n}}}\leq C_{0}(t),
\end{align}
where $C_{0}(t)$ depends only on the initial data and the time $t$.
\end{lemma}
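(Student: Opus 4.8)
\emph{Proof proposal.} The plan is to prove \eqref{3.31} first and to deduce \eqref{3.32} from it by a standard transport-equation argument. For \eqref{3.31}, the idea is to estimate $\|\nabla u\|_{L^{\infty}}$ through the two spatial norms of $u$ already under control by Lemmas \ref{lemma3.1} and \ref{lemma3.2}, namely $\Lambda^{\alpha}u\in L^{\infty}(0,T;L^{2})$ and $\Lambda^{2\alpha}u\in L^{2}(0,T;L^{2})$. The hypotheses give $\alpha\le\frac{n}{2}$ and, since $\alpha+\beta=1+\frac{n}{2}$ with $\beta\le\alpha$, also $2\alpha\ge 1+\frac{n}{2}$; hence $\alpha-1<\frac{n}{2}<2\alpha-1$ (the upper inequality strict as soon as $\beta<\alpha$), and a frequency-splitting Gagliardo--Nirenberg argument yields
$$\|\nabla u\|_{L^{\infty}}\le C\,\|\Lambda^{\alpha}u\|_{L^{2}}^{\,1-\frac{\beta}{\alpha}}\,\|\Lambda^{2\alpha}u\|_{L^{2}}^{\,\frac{\beta}{\alpha}},$$
the exponent $\frac{\beta}{\alpha}$ being forced by scaling. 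I would then integrate in time and apply H\"older in $\tau$ with the conjugate pair $\big(\tfrac{2\alpha}{\beta},\tfrac{2\alpha}{2\alpha-\beta}\big)$ to obtain
$$\int_{0}^{t}\|\nabla u(\tau)\|_{L^{\infty}}\,\mathrm{d}\tau\le C\Big(\sup_{0\le\tau\le t}\|\Lambda^{\alpha}u(\tau)\|_{L^{2}}\Big)^{1-\frac{\beta}{\alpha}}\,t^{\,1-\frac{\beta}{2\alpha}}\,\Big(\int_{0}^{t}\|\Lambda^{2\alpha}u(\tau)\|_{L^{2}}^{2}\,\mathrm{d}\tau\Big)^{\frac{\beta}{2\alpha}},$$
and the right-hand side is bounded by $C_{0}(t)$ thanks to \eqref{3.4} and the bound $\int_{0}^{t}\|\Lambda^{2\alpha}u\|_{L^{2}}^{2}\,\mathrm{d}\tau\le C_{0}$ established at the end of the proof of Lemma \ref{lemma3.2}. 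This gives \eqref{3.31}.

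For \eqref{3.32}, I would rewrite $\eqref{1.1}_{1}$ in the transport form $\partial_{t}\rho+u\cdot\nabla\rho=0$ (using $\nabla\cdot u=0$) and differentiate in $x_{j}$ to get $\partial_{t}\partial_{j}\rho+u\cdot\nabla\partial_{j}\rho=-(\partial_{j}u)\cdot\nabla\rho$. Writing $p=\frac{2n}{6\alpha-n}$, with $1\le p<\infty$ since $\frac{n}{6}<\alpha\le\frac{n}{2}$, I would multiply by $|\nabla\rho|^{p-2}\partial_{j}\rho$, sum over $j$ and integrate over $\mathbb{R}^{n}$; the transport contribution $\frac{1}{p}\int_{\mathbb{R}^{n}}u\cdot\nabla|\nabla\rho|^{p}\,\mathrm{d}x$ vanishes after an integration by parts by incompressibility, so that
$$\frac{\mathrm{d}}{\mathrm{d}t}\|\nabla\rho(t)\|_{L^{p}}\le\|\nabla u(t)\|_{L^{\infty}}\,\|\nabla\rho(t)\|_{L^{p}}.$$
Gr\"onwall's Lemma \ref{lemma1} together with \eqref{3.31} then gives $\|\nabla\rho(t)\|_{L^{p}}\le\|\nabla\rho_{0}\|_{L^{p}}\exp\!\big(\int_{0}^{t}\|\nabla u(\tau)\|_{L^{\infty}}\,\mathrm{d}\tau\big)\le C_{0}(t)$, which is \eqref{3.32}. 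Rigorously, these manipulations are performed on the regularized solutions of Proposition \ref{proposition1} and then passed to the limit; the case $p=1$ uses the renormalization theory of transport equations.

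The step I expect to be the main obstacle is the Gagliardo--Nirenberg bound for $\|\nabla u\|_{L^{\infty}}$ at the endpoint $\alpha=\beta=\frac12+\frac{n}{4}$: there $2\alpha-1=\frac{n}{2}$, the exponent $\frac{\beta}{\alpha}$ degenerates to $1$, and $\dot H^{1+n/2}\not\hookrightarrow W^{1,\infty}$, so $\|\nabla u\|_{L^{\infty}}$ is no longer controlled by $\|\Lambda^{2\alpha}u\|_{L^{2}}$ alone. To close it in that case I would bring in the time-weighted higher-order information of \eqref{3.5} and Lemma \ref{lemma3.3} (for instance the $L^{\infty}_{t}L^{2}_{x}$ bound for $t^{1/2}\Lambda^{2\alpha}u$ that follows from the Stokes system \eqref{3.24}), or, more cleanly, replace $L^{\infty}$ by the slightly smaller Besov space $B^{0}_{\infty,1}(\mathbb{R}^{n})$: one still has $\nabla u\in L^{1}(0,T;B^{0}_{\infty,1})\hookrightarrow L^{1}(0,T;L^{\infty})$, the borderline logarithmic loss disappears, and the transport estimate for $\|\nabla\rho\|_{L^{p}}$ goes through verbatim. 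When $\beta<\alpha$ strictly, the exponent $\frac{\beta}{\alpha}<1$ already leaves room and both estimates are entirely routine.
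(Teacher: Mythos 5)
Your treatment of \eqref{3.32} is exactly the paper's: differentiate the transport equation, use $\nabla\cdot u=0$ to kill the convection term in the $L^{p}$ energy identity, and apply Gr\"onwall with \eqref{3.31}. The real issue is \eqref{3.31}, where you take a genuinely different and, as written, incomplete route. Your two--norm interpolation
$\|\nabla u\|_{L^{\infty}}\le C\|\Lambda^{\alpha}u\|_{L^{2}}^{1-\beta/\alpha}\|\Lambda^{2\alpha}u\|_{L^{2}}^{\beta/\alpha}$
is only valid when $\alpha<1+\tfrac n2<2\alpha$, i.e.\ when $\beta<\alpha$ strictly; at $\alpha=\beta=\tfrac12+\tfrac n4$ the exponent degenerates to $1$ and the inequality reduces to the false embedding $\dot H^{1+n/2}\hookrightarrow \dot W^{1,\infty}$. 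That endpoint is not an exotic corner case: it is explicitly allowed by Theorem \ref{thm1} and is precisely the regime of \cite{Y2019}, so a proof that only covers $\beta<\alpha$ does not prove the lemma. You do flag this, but neither of your proposed repairs closes it. A time-weighted $L^{\infty}_{t}L^{2}_{x}$ bound on $t^{1/2}\Lambda^{2\alpha}u$ does not help, because even a pointwise-in-time bound on $\|\Lambda^{2\alpha}u\|_{L^{2}}$ fails to control $\|\nabla u\|_{L^{\infty}}$ at the endpoint for the same embedding reason. And to get $\nabla u\in L^{1}_{t}B^{0}_{\infty,1}$ you would need integrability of $\Lambda^{2\alpha}u$ in some $L^{p}_{x}$ with $p>2$ (or an extra derivative), which is not among the estimates of Lemmas \ref{lemma3.1}--\ref{lemma3.3} and must be proved.

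That missing $L^{p}$, $p>2$, information is exactly what the paper supplies, and it is the substantive content of its proof: it applies the $L^{p}$ elliptic estimate to the Stokes system \eqref{3.24} for $\tfrac{n}{2\alpha-1}<p<\tfrac{2n}{n-2\alpha}$, bounding each source term by interpolation between $L^{2}$ and $L^{\frac{2n}{n-2\alpha}}$ --- in particular $\|\rho\partial_{t}u\|_{L^{p}}\le\|\sqrt{\rho}\partial_{t}u\|_{L^{2}}^{\theta}\|\Lambda^{\alpha}\partial_{t}u\|_{L^{2}}^{1-\theta}$, which is where the bound on $\int_{0}^{t}\|\Lambda^{\alpha}\partial_{\tau}u\|_{L^{2}}^{2}\,\mathrm{d}\tau$ from Lemma \ref{lemma3.3} enters --- and then uses
$\|\nabla u\|_{L^{\infty}}\le C\|\nabla u\|_{L^{n/\beta}}^{1-\eta}\|\Lambda^{2\alpha}u\|_{L^{p}}^{\eta}$,
which survives the endpoint because $2\alpha-\tfrac np>1$ there. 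To make your argument complete you would either have to adopt this $L^{p}$ step (at which point you have essentially reproduced the paper's proof), or restrict your direct Gagliardo--Nirenberg argument to $\beta<\alpha$ and handle $\alpha=\beta$ separately by an argument you have not yet given.
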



\begin{proof}
First, for any $2<p<\frac{2n}{n-2\alpha}$, we have by the interpolation inequality of Lebesgue spaces
\begin{align}\label{3.34}
\|\rho\partial_{t}u\|_{L^{p}}&\leq\|\rho\partial_{t}u\|_{L^{2}}^{\theta}\|\rho\partial_{t}u\|_{L^{\frac{2n}{n-2\alpha}}}^{1-\theta}\notag\\
&\leq\|\sqrt{\rho}\|_{L^{\infty}}^{\theta}\|\sqrt{\rho}\partial_{t}u\|_{L^{2}}^{\theta}\|\rho\|_{L^{\infty}}^{1-\theta}\|\partial_{t}u\|_{L^{\frac{2n}{n-2\alpha}}}^{1-\theta}\notag\\
&\leq\|\sqrt{\rho}\partial_{t}u\|_{L^{2}}^{\theta}\|\Lambda^{\alpha}\partial_{t}u\|_{L^{2}}^{1-\theta},
\end{align}
where
$\theta\triangleq1-\frac{n}{2\alpha}+\frac{n}{p\alpha}\in(0,1)$.
Next, by the H\"older and Sobolev's inequalities we have
\begin{align}\label{3.35}
\|u\cdot\nabla u\|_{L^{\frac{2n}{n-2\alpha}}}&\leq
C\|u\|_{L^{\infty}}\|\nabla u\|_{L^{\frac{2n}{n-2\alpha}}}\notag\\
&\leq
C\|\Lambda^{\alpha}u\|_{L^{2}}^{2-\frac{n}{2\alpha}}\|\Lambda^{2\alpha}u\|_{L^{2}}^{\frac{n}{2\alpha}-1}\|\Lambda^{\alpha}u\|_{L^{2}}^{1-\frac{1}{\alpha}}\|\Lambda^{2\alpha}u\|_{L^{2}}^{\frac{1}{\alpha}}\notag\\
&\leq
C\|\Lambda^{\alpha}u\|_{L^{2}}^{3-\frac{n}{2\alpha}-\frac{1}{\alpha}}\|\Lambda^{2\alpha}u\|_{L^{2}}^{\frac{n}{2\alpha}+\frac{1}{\alpha}-1}
\end{align}
Thus, by the interpolation in Lebesgue space, it can be deduced
\begin{align}\label{3.36}
\|\rho u\cdot\nabla u\|_{L^{p}}&\leq C\|\rho u\cdot\nabla
u\|_{L^{2}}^{\theta}\|\rho u\cdot\nabla
u\|_{L^{\frac{2n}{n-2\alpha}}}^{1-\theta}\notag\\
&\leq\|\rho\|_{L^{\infty}}^{\theta}\| u\cdot\nabla
u\|_{L^{2}}^{\theta}\|\rho\|_{L^{\infty}}^{1-\theta}\|
u\cdot\nabla u\|_{L^{\frac{2n}{n-2\alpha}}}^{1-\theta}\notag\\
&\leq
C_{0}(\|\Lambda^{\alpha}u\|_{L^{2}}^{3-\frac{n}{2\alpha}-\frac{1}{\alpha}}\|\Lambda^{2\alpha}u\|_{L^{2}}^{\frac{n}{2\alpha}+\frac{1}{\alpha}-1})^{1-\theta}\notag\\
&\leq
C_{0}(\|\Lambda^{\alpha}u\|_{L^{2}}^{2}+\|\Lambda^{2\alpha}u\|_{L^{2}}^{2})^{1-\theta},
\end{align}
where we have used that
\begin{align*}
\| u\cdot\nabla u\|_{L^{2}}&\leq
C\|u\|_{L^{\frac{2n}{n-2\alpha}}}\|\nabla
u\|_{L^{\frac{n}{\alpha}}}\\
&\leq C\|\Lambda^{\alpha}u\|_{L^{2}}\|\Lambda^{\beta}u\|_{L^{2}}\\
&\leq
C\|\Lambda^{\alpha}u\|_{L^{2}}(\|\Lambda^{\alpha}u\|_{L^{2}}+\|u\|_{L^{2}})\\&\leq
C_{0}.
\end{align*}
Using a similar argument as \eqref{3.36}, it can be obtained that
\begin{align}\label{3.37}
 \|b\cdot\nabla
b\|_{L^{p}}\leq
C(\|\Lambda^{\beta}b\|_{L^{2}}^{2}+\|\Lambda^{2\beta}b\|_{L^{2}}^{2})^{1-\theta}.
\end{align}
Next, applying the $L^{p}$-estimate to Stoke equations \eqref{3.24}, we arrive at
\begin{align}\label{3.38}
\|\Lambda^{2\alpha}u\|_{L^{p}}\leq
C\|\rho\partial_{t}u\|_{L^{p}}+C\|\rho u\cdot\nabla
u\|_{L^{p}}+C\|b\cdot\nabla b\|_{L^{p}}.
\end{align}
Now, by the Gagliardo-Nirenberg's inequality, inserting \eqref{3.34}, \eqref{3.36}-\eqref{3.38}, it can be deduced that for $p>\frac{n}{2\alpha-1}$
\begin{align*}
\|\nabla u\|_{L^{\infty}}&\leq C\|\nabla
u\|_{L^{\frac{n}{\beta}}}^{1-\eta}\|\Lambda^{2\alpha}u\|_{L^{p}}^{\eta}\leq
C\|\Lambda^{\alpha}u\|_{L^{2}}^{1-\eta}\|\Lambda^{2\alpha}u\|_{L^{p}}^{\eta}\\
&\leq
C\|\Lambda^{\alpha}u\|_{L^{2}}^{1-\eta}(\|\rho\partial_{t}u\|_{L^{p}}+(\|\Lambda^{\alpha}u\|_{L^{2}}^{2}+\|\Lambda^{2\alpha}u\|_{L^{2}}^{2}+\|\Lambda^{\beta}b\|_{L^{2}}^{2}+\|\Lambda^{2\beta}b\|_{L^{2}}^{2})^{1-\theta})^{\eta},
\end{align*}
where $\eta=\frac{(2+n-2\alpha)p}{p(n+2\alpha)-2n}\in(0,1)$.
Using
Lemma \ref{3.1} and Lemma \ref{3.2}, we
have
\begin{align*}
\int_{0}^{t}\|\nabla u(\tau)\|_{L^{\infty}}\mathrm{d}\tau\leq
C_{0}(t).
\end{align*}
Finally, applying $\nabla$ on the both sides of the equation $\eqref{1.1}_{1}$, we get
$$\partial_{t}\nabla\rho+u\cdot\nabla(\nabla\rho)=-\nabla
u\cdot\nabla\rho.$$
 Due to the divergence free condition $\nabla\cdot u=0,$ we have
$$\frac{\mathrm{d}}{\mathrm{d}t}\|\nabla\rho\|_{L^{\frac{2n}{6\alpha-n}}}\leq\|\nabla
u\|_{L^{\infty}}\|\nabla\rho\|_{L^{\frac{2n}{6\alpha-n}}}.$$
 Using
the Gronwall Lemma \ref{lemma1} and the a priori estimate \eqref{3.31} it follows that
\begin{align*}
\|\nabla\rho\|_{L^{\frac{2n}{6\alpha-n}}}\leq\|\nabla\rho_{0}\|_{L^{\frac{2n}{6\alpha-n}}}\exp\int_{0}^{t}\|\nabla
u(\tau)\|_{L^{\infty}}\mathrm{d}\tau\leq C_{0}(t).
\end{align*}
We thus complete the proof of Lemma \ref{3.4}.
\end{proof}


\section{Proof of Theorem \ref{thm1}}
\setcounter{equation}{0}

 This section aims at the proof of
Theorem \ref{thm1}. Thanks to Lemmas \ref{3.1}-\ref{3.4} and by the local existence
Proposition \ref{proposition1}, we can get the global existence of
strong solutions directly. In the following part, we prove the
continuity and uniqueness of the solution which we have just constructed.


\begin{proof}
Firstly, we prove the time continuity of the solution, namely
\begin{align}\label{4.1} \rho\in
C([0,T];L^{p}(\mathbb{R}^{n})),\frac{n}{2\alpha}\leq p<\infty,
\end{align}
\begin{align}\label{4.2}
\rho u\in C([0,T];L^{2}(\mathbb{R}^{n})).
\end{align}
\begin{align}\label{4.3}
 u\in C([0,T];\dot
H^{\alpha}(\mathbb{R}^{n})).
\end{align}
\begin{align}\label{4.4}
 b\in C([0,T];
H^{\beta}(\mathbb{R}^{n})).
\end{align}
To prove the
continuity on $[0,T]$, without loss of generality,  we only need to prove the continuity  at the initial time $t=0$. Since
$\partial_{t}\rho=-u\cdot\nabla\rho,$ it has
\begin{align*}
\|\rho(t)-\rho(0)\|_{L^{\frac{n}{2\alpha}}}&=\|\int_{0}^{t}\partial_{t}\rho(\tau)\mathrm{d}\tau\|_{L^{\frac{n}{2\alpha}}}\\
&=\|\int_{0}^{t}u\cdot\nabla\rho(\tau)\mathrm{d}\tau\|_{L^{\frac{n}{2\alpha}}}\\
&\leq\int_{0}^{t}\|u\cdot\nabla\rho(\tau)\|_{L^{\frac{n}{2\alpha}}}\mathrm{d}\tau\\
&\leq\int_{0}^{t}\|u(\tau)\|_{L^{\frac{2n}{n-2\alpha}}}\|\nabla\rho(\tau)\|_{L^{\frac{2n}{6\alpha-n}}}\mathrm{d}\tau\\
&\leq\int_{0}^{t}\|\Lambda^{\alpha}u(\tau)\|_{L^{2}}\|\nabla\rho(\tau)\|_{L^{\frac{2n}{6\alpha-n}}}\mathrm{d}\tau\\
&\leq C_{0}(t)t,
\end{align*}
where in the last line we have used the a priori estimates \eqref{3.4} and \eqref{3.32}.
By the H\"older inequality, one has
$$\|\rho(t)-\rho(0)\|_{L^{p}}\leq C\|\rho(t)-\rho(0)\|_{L^{\frac{n}{2\alpha}}}^{\frac{n}{2\alpha
p}}\|\rho(t)-\rho(0)\|_{L^{\infty}}^{1-\frac{n}{2\alpha p}}\leq
C_{0}(t)t^{\frac{n}{2\alpha p}},$$
which implies that $\rho$ is
continuous at the original time and satisfies
$\lim_{t\rightarrow 0}\|\rho-\rho_{0}\|_{L^{p}}=0$.  To
prove \eqref{4.2}, we first notice that
\begin{align*}
\|\rho
u(t)-\rho u(0)\|_{L^{\frac{2n}{n+2\alpha}}}&=\|\int_{0}^{t}
\partial_{t}(\rho
u)(\tau)\mathrm{d}\tau\|_{L^{\frac{2n}{n+2\alpha}}}\\
&=\|\int_{0}^{t}
\partial_{t}\rho
u(\tau)+\rho\partial_{t}u(\tau)\mathrm{d}\tau\|_{L^{\frac{2n}{n+2\alpha}}}\\
&\leq\int_{0}^{t}\|
\partial_{t}\rho
u(\tau)\|_{L^{\frac{2n}{n+2\alpha}}}\mathrm{d}\tau+\int_{0}^{t}\|\rho\partial_{t}u(\tau)\|_{L^{\frac{2n}{n+2\alpha}}}\mathrm{d}\tau\\
&\leq\int_{0}^{t}\| u\cdot\nabla\rho
u(\tau)\|_{L^{\frac{2n}{n+2\alpha}}}\mathrm{d}\tau+\int_{0}^{t}\|\sqrt{\rho}\sqrt{\rho}\partial_{t}u(\tau)\|_{L^{\frac{2n}{n+2\alpha}}}\mathrm{d}\tau\\
&\leq\int_{0}^{t}\|\nabla\rho
(\tau)\|_{L^{\frac{2n}{6\alpha-n}}}\|u(\tau)\|_{L^{\frac{2n}{n-2\alpha}}}^{2}\mathrm{d}\tau+\int_{0}^{t}\|\rho(\tau)\|_{L^{\frac{n}{2\alpha}}}^{\frac{1}{2}}\|\sqrt{\rho}\partial_{t}u(\tau)\|_{L^{2}}\mathrm{d}\tau\\
&\leq C_{0}(t)\int_{0}^{t}\|\Lambda^{\alpha}u(\tau)\|_{L^{2}}^{2}\mathrm{d}\tau+C_{0}\int_{0}^{t}\|\sqrt{\rho}\partial_{t}u(\tau)\|_{L^{2}}\mathrm{d}\tau\\
&\leq C_{0}(t)t+C_{0}t^{\frac{1}{2}}(\int_{0}^{t}\|\sqrt{\rho}\partial_{t}u(\tau)\|_{L^{2}}^{2}\mathrm{d}\tau)^{\frac{1}{2}}\\
&\leq C_{0}(t)t+C_{0}t^{\frac{1}{2}},
\end{align*}
 By the
H\"older inequality, one has
\begin{align*}
\|\rho u(t)-\rho u(0)\|_{L^{2}}&\leq\|\rho
u(t)-\rho_{0}u_{0}\|_{L^{\frac{2n}{n+2\alpha}}}^{\frac{1}{2}}\|\rho
u(t)-\rho_{0}u_{0}\|_{L^{\frac{2n}{n-2\alpha}}}^{\frac{1}{2}}\\
&\leq(C_{0}(t)t+C_{0}t^{\frac{1}{2}})^{\frac{1}{2}}.
\end{align*}
Thus we have that $\rho u$ is continuous at the original time and
satisfies $\lim_{t\rightarrow 0}\|\rho u-\rho u(0)\|_{L^{2}}=0$.  To prove \eqref{4.3}, we can obtain by direct
calculation,
\begin{align*}
\|\Lambda^{\alpha}u(t)-\Lambda^{\alpha}u(0)\|_{L^{2}}&=\|\int_{0}^{t}\partial_{\tau}(\Lambda^{\alpha}u)(\tau)\mathrm{d}\tau\|_{L^{2}}\\
&\leq\int_{0}^{t}\|\partial_{\tau}(\Lambda^{\alpha}u)(\tau)\|_{L^{2}}\mathrm{d}\tau\\
&\leq C(\int_{0}^{t}\|\Lambda^{\alpha}\partial_{\tau}u(\tau)\|_{L^{2}}^{2}\mathrm{d}\tau)^{\frac{1}{2}}\cdot t^{\frac{1}{2}}\\
&\leq C_{0}t^{\frac{1}{2}},
\end{align*}
 Using a similar argument, we get
\begin{align*}
\|\Lambda^{\beta}b(t)-\Lambda^{\beta}b(0)\|_{L^{2}}\leq C_{0}t^{\frac{1}{2}}
\end{align*}
 and
\begin{align*}
\|b(t)-b(0)\|_{L^{2}}
\leq C_{0}t^{\frac{1}{2}}.
\end{align*}
We obtain that
$\|b\|_{H^{\beta}}$ is continuous at the original time and
satisfies $\|b\|_{H^{\beta}}|_{t=0}=\|b_{0}\|_{H^{\beta}}$.

 Next, we show the uniqueness of the
solution. Assume that $(\rho^{(1)},u^{(1)},b^{(1)})$ and
$(\rho^{(2)},u^{(2)},b^{(2)})$ are two solutions of the equations
\eqref{1.1} emanating from the same initial data $(\rho_0, u_0, b_0)$, and possess
the properties in Theorem \ref{1.1}. Their difference
$(\tilde{\rho},\tilde{u},\tilde{b})$ denoted by
$$\tilde{\rho}=\rho^{(2)}-\rho^{(1)},\tilde{u}=u^{(2)}-u^{(1)},\tilde{b}=b^{(2)}-b^{(1)}$$
satisfies the following equations
\begin{equation}\label{4.5}
\begin{cases}
\partial_{t}\tilde{\rho}+u^{(2)}\cdot\nabla\tilde{\rho}=-\tilde{u}\cdot\nabla\rho^{(1)},\\
\rho^{(2)}\partial_{t}\tilde{u}+\rho^{(2)}u^{(2)}\cdot\nabla\tilde{u}+(-\Delta)^{\alpha}\tilde{u}+\nabla (P^{(2)}-P^{(1)})\\
 =-\tilde{\rho}(\partial_{t}u^{(1)}+u^{(1)}\cdot\nabla u^{(1)})-\rho^{(2)}\tilde{u}\cdot\nabla u^{(1)}+\tilde{b}\cdot\nabla b^{(2)}-b^{(1)}\cdot\nabla\tilde{b}, \\
\partial_{t}\tilde{b}+u^{(2)}\cdot\nabla\tilde{b}+(-\Delta)^{\beta}\tilde{b}=\tilde{b}\cdot\nabla u^{(2)}+b^{(1)}\cdot\nabla\tilde{u}-\tilde{u}\cdot\nabla b^{(1)}.
\end{cases}
\end{equation}
We estimate the difference $(\tilde{\rho},\tilde{u},\tilde{b})$ in
$L^{2}(\mathbb{R}^{n})$ space. Dotting the equations $\eqref{4.5}_{2,3}$ by
$(\tilde{u},\tilde{b})$ and applying the divergence free
condition, we find
\begin{align}\label{4.6}
\frac{1}{2}\frac{\mathrm{d}}{\mathrm{d}t}(\|\sqrt{\rho^{(2)}}\tilde{u}(t)\|_{L^{2}}^{2}+\|\tilde{b}\|_{L^{2}}^{2})+\|\Lambda^{\alpha}\tilde{u}\|_{L^{2}}^{2}+\|\Lambda^{\beta}\tilde{b}\|_{L^{2}}^{2}=L_{1}+L_{2}+L_{3}+L_{4},
\end{align}
where
\begin{align*}
&L_{1}=-\int_{\mathbb{R}^{n}}\rho^{(2)}\tilde{u}\cdot\nabla
u^{(1)}\cdot\tilde{u}\mathrm{d}x,\qquad
L_{2}=-\int_{\mathbb{R}^{n}}\tilde{\rho}(\partial_{t}u^{(1)}+u^{(1)}\cdot\nabla u^{(1)})\cdot\tilde{u}\mathrm{d}x,\\
&L_{3}=\int_{\mathbb{R}^{n}}\tilde{b}\cdot\nabla
b^{(2)}\cdot\tilde{u}\mathrm{d}x,\qquad
L_{4}=\int_{\mathbb{R}^{n}}(\tilde{b}\cdot\nabla u^{(2)}-\tilde{u}\cdot\nabla b^{(1)})\cdot\tilde{b}\mathrm{d}x.
\end{align*}
By H\"older inequality, one has
 \begin{align}\label{4.7}
 L_{1}\leq C\|\nabla
u^{(1)}\|_{L^{\infty}}\|\sqrt{\rho^{(2)}}\tilde{u}\|_{L^{2}}^{2}.
\end{align}
By the H\"older, Sobolev and
Young's inequalities and the estimate \eqref{3.35}, it yields
\begin{align}\label{4.8}
L_{2}&\leq
C\|\tilde{\rho}\|_{L^{\frac{n}{2\alpha}}}\|\tilde{u}\|_{L^{\frac{2n}{n-2\alpha}}}(\|\partial_{t}u^{(1)}\|_{L^{\frac{2n}{n-2\alpha}}}+\|u^{(1)}\cdot\nabla
u^{(1)}\|_{L^{\frac{2n}{n-2\alpha}}})\notag\\
&\leq
C\|\tilde{\rho}\|_{L^{\frac{n}{2\alpha}}}\|\Lambda^{\alpha}\tilde{u}\|_{L^{2}}(\|\Lambda^{\alpha}\partial_{t}u^{(1)}\|_{L^{2}}+\|\Lambda^{\alpha}u\|_{L^{2}}^{3-\frac{n}{2\alpha}-\frac{1}{\alpha}}\|\Lambda^{2\alpha}u\|_{L^{2}}^{\frac{n}{2\alpha}+\frac{1}{\alpha}-1})\notag\\
&\leq\frac{1}{16}\|\Lambda^{\alpha}\tilde{u}\|_{L^{2}}^{2}+C\|\tilde{\rho}\|_{L^{\frac{n}{2\alpha}}}^{2}(\|\Lambda^{\alpha}\partial_{t}u^{(1)}\|_{L^{2}}^{2}
+\|\Lambda^{\alpha}u\|_{L^{2}}^{6-\frac{n}{\alpha}-\frac{2}{\alpha}}\|\Lambda^{2\alpha}u\|_{L^{2}}^{\frac{n}{\alpha}+\frac{2}{\alpha}-2}).
\end{align}
By a similar argument as deriving $L_{2}$, we obtain
\begin{align}\label{4.9}
L_{3}&\leq C\|\tilde{b}\|_{L^{2}}\|\nabla
b^{(2)}\|_{L^{\frac{n}{\alpha}}}\|\tilde{u}\|_{L^{\frac{2n}{n-2\alpha}}}\notag\\
&\leq
C\|\tilde{b}\|_{L^{2}}\|\Lambda^{\beta}b^{(2)}\|_{L^{2}}\|\Lambda^{\alpha}\tilde{u}\|_{L^{2}}\notag\\
&\leq
\frac{1}{16}\|\Lambda^{\alpha}\tilde{u}\|_{L^{2}}^{2}+C\|\Lambda^{\beta}b^{(2)}\|_{L^{2}}^{2}\|\tilde{b}\|_{L^{2}}^{2}
\end{align}
and
\begin{align}\label{4.10}
L_{4}&\leq C\|\nabla
u^{(2)}\|_{L^{\infty}}\|\tilde{b}\|_{L^{2}}^{2}+C\|\tilde{b}\|_{L^{2}}\|\nabla
b^{(1)}\|_{L^{\frac{n}{\alpha}}}\|\tilde{u}\|_{L^{\frac{2n}{n-2\alpha}}}\notag\\
&\leq C\|\nabla
u^{(2)}\|_{L^{\infty}}\|\tilde{b}\|_{L^{2}}^{2}+C\|\tilde{b}\|_{L^{2}}\|\Lambda^{\beta}
b^{(1)}\|_{L^{2}}\|\Lambda^{\alpha}\tilde{u}\|_{L^{2}}\notag\\
&\leq\frac{1}{16}\|\Lambda^{\alpha}\tilde{u}\|_{L^{2}}^{2}+\|\tilde{b}\|_{L^{2}}^{2}(\|\nabla
u^{(2)}\|_{L^{\infty}}+\|\Lambda^{\beta} b^{(1)}\|_{L^{2}}^{2}).
\end{align}
Inserting \eqref{4.7}-\eqref{4.10}  into \eqref{4.6}, it leads to
\begin{align*}
&\frac{\mathrm{d}}{\mathrm{d}t}(\|\sqrt{\rho^{2}}\tilde{u}(t)\|_{L^{2}}^{2}+\|\tilde{b}\|_{L^{2}}^{2})+\|\Lambda^{\alpha}\tilde{u}\|_{L^{2}}^{2}+\|\Lambda^{\beta}\tilde{b}\|_{L^{2}}^{2}\\
&\leq
C(\|\Lambda^{\alpha}\partial_{t}u^{(1)}\|_{L^{2}}^{2}+\|\Lambda^{\alpha}u\|_{L^{2}}^{6-\frac{n}{\alpha}-\frac{2}{\alpha}}\|\Lambda^{2\alpha}u\|_{L^{2}}^{\frac{n}{\alpha}+\frac{2}{\alpha}-2})\|\tilde{\rho}\|_{L^{\frac{n}{2\alpha}}}^{2}\\
&\qquad+C(\|\nabla u^{(2)}\|_{L^{\infty}}+\|\Lambda^{\beta}
b^{(1)}\|_{L^{2}}+\|\Lambda^{\beta}b^{(2)}\|_{L^{2}}^{2})(\|\tilde{b}\|_{L^{2}}^{2}+\|\sqrt{\rho^{(2)}}\tilde{u}\|_{L^{2}}^{2}).
\end{align*}
In order to close the above inequality, we need to derive the estimate of $\|\tilde{\rho}\|_{L^{\frac{n}{2\alpha}}}$ as follows:
\begin{align*}
\frac{\mathrm{d}}{\mathrm{d}t}\|\tilde{\rho}\|_{L^{\frac{n}{2\alpha}}}^{\frac{n}{2\alpha}}&\leq
C\|\tilde{\rho}\|_{L^{\frac{n}{2\alpha}}}^{\frac{n}{2\alpha}-1}\|\tilde{u}\cdot\nabla\rho^{(1)}\|_{L^{\frac{n}{2\alpha}}}\\
&\leq
C\|\tilde{\rho}\|_{L^{\frac{n}{2\alpha}}}^{\frac{n}{2\alpha}-1}\|\tilde{u}\|_{L^{\frac{2n}{n-2\alpha}}}\|\nabla\rho^{(1)}\|_{L^{\frac{2n}{6\alpha-n}}}\\
&\leq
C\|\tilde{\rho}\|_{L^{\frac{n}{2\alpha}}}^{\frac{n}{2\alpha}-1}\|\Lambda^{\alpha}\tilde{u}\|_{L^{2}}\|\nabla\rho^{(1)}\|_{L^{\frac{2n}{6\alpha-n}}},
\end{align*}
which leads to
$$\frac{\mathrm{d}}{\mathrm{d}t}\|\tilde{\rho}\|_{L^{\frac{n}{2\alpha}}}\leq
C\|\Lambda^{\alpha}\tilde{u}\|_{L^{2}}\|\nabla\rho^{(1)}\|_{L^{\frac{2n}{6\alpha-n}}}.$$
Now let us denote
\begin{align*}
&X(t)\triangleq\|\tilde{\rho}\|_{L^{\frac{n}{2\alpha}}},\qquad
Y(t)\triangleq\|\sqrt{\rho^{(2)}}\tilde{u}(t)\|_{L^{2}}^{2}+\|\tilde{b}(t)\|_{L^{2}}^{2},\\
&Z(t)\triangleq\|\Lambda^{\alpha}\tilde{u}\|_{L^{2}}^{2}+\|\Lambda^{\beta}\tilde{b}\|_{L^{2}}^{2},\qquad
A\triangleq\|\nabla\rho^{(1)}\|_{L^{\frac{2n}{6\alpha-n}}},\\
&\gamma(t)\triangleq C(\|\nabla u^{(2)}\|_{L^{\infty}}+\|\Lambda^{\beta}
b^{(1)}\|_{L^{2}}^{2}+\|\Lambda^{\beta}b^{(2)}\|_{L^{2}}^{2}),\\
&\eta(t)\triangleq
C(\|\Lambda^{\alpha}\partial_{t}u^{(1)}\|_{L^{2}}^{2}+\|\Lambda^{\alpha}u\|_{L^{2}}^{6-\frac{n}{\alpha}-\frac{2}{\alpha}}\|\Lambda^{2\alpha}u\|_{L^{2}}^{\frac{n}{\alpha}+\frac{2}{\alpha}-2}),
\end{align*}
which satisfy
\begin{align*}
\begin{cases}
\frac{\mathrm{d}}{\mathrm{d}t}X(t)\leq AZ^{\frac{1}{2}}(t),\\
\frac{\mathrm{d}}{\mathrm{d}t}Y(t)+Z(t)\leq\gamma(t)Y(t)+\eta(t)X^{2}(t), \\
X(0)=0, \\
\end{cases}
\end{align*}
According to the estimates of Lemmas \ref{3.1}-\ref{3.4}, we get
$$\int_{0}^{t}\gamma(\tau)\mathrm{d}\tau\leq C_{0}(t),\qquad
\int_{0}^{t}\tau\eta(\tau)\mathrm{d}\tau\leq C_{0}(t).$$ Thus, it
follows from Lemma \ref{2.2} that
$$\|\sqrt{\rho^{(2)}}\tilde{u}(t)\|_{L^{2}}^{2}+\|\tilde{b}(t)\|_{L^{2}}^{2}=\|(\Lambda^{\alpha}\tilde{u},\Lambda^{\alpha}\tilde{b})(\tau)\|_{L^{2}}^{2}=\|\tilde{\rho}\|_{L^{\frac{n}{2\alpha}}}=0.$$
which implies that
$$\rho^{(2)}\equiv \rho^{(1)},\qquad u^{(2)}\equiv u^{(1)},\qquad
b^{(2)}\equiv b^{(1)} \qquad \mbox{on}\ [0,T],$$
and we prove the uniqueness part of Theorem
\ref{1.1}. The proof of Theorem \ref{1.1} is thus completed.
\end{proof}


\textbf{Acknowledgements} The research of B Yuan was partially
supported by the National Natural Science Foundation of China (No.
11471103).


\begin{thebibliography}{}
\bibitem{AK1972}
S. A. Antontsev, A. V. Kazhikov, Mathematical questions of the
dynamics of nonhomogeneous fluids, Lecture notes, Novosibirsk
State University, 1973.

\bibitem{AK1990}
S. A. Antontsev, A. V. Kazhikov, V. N. Monakhow, Boundary value
problems in mechanics of nonhomogeneous fluids. North-Holland,
Amsterdam, 1990.

\bibitem{AP2008}
H. Abidi, M. Paicu, Global existence for the magnetohydrodynamic
system in critical spaces, \textit{Proc. Roy. Soc. Edinburgh Sect.
A} \textbf{138}(2008), 447-476.



\bibitem{CHW2013}
W. Craig, X. Huang, Y. Wang, Global wellposedness for the 3D
inhomogeneous incompressible Navier-Stokes equations, \textit{J.
Math. Fluid Mech.},\textbf{15}(2013),747-758.

\bibitem{CK2003}
 H. J. Choe, H. Kim, Strong solutions of the Navier-stokes equations for nonhomogeneous
incompressible fluids, \textit{Comm. Partial Differ. Equ.}
\textbf{28}(2003), 1183-1201.

\bibitem{CTW2011}
Q. Chen, Z. Tan and Y. Wang, Strong solutions to the
incompressible magnetohydrodynamic equations, \textit{Math.
Methods Appl. Sci.} \textbf{34}(2011), 94-107.

\bibitem{CW2011}
C. Cao, J. Wu, Global regularity for the 2D MHD equations with
mxied partial dissipation and magnetic diffusion, \textit{Adv.
Math.} \textbf{226}(2011), 1803-1822.

\bibitem{CWY2014}
C. Cao, J. Wu, B. Yuan, The 2D incompressible magnetohydronamics
equations with only magnetic diffusion, \textit{SIAM J. Math.
Anal.} \textbf{46}(2014), 588-602.

\bibitem{DL1972}
G. Duvaut, J.-L. Lions, In\'{e}quations en thermo\'{e}lasticit\'{e} et
magn\'{e}tohydrodynamique, \textit{ Arch. Ration. Mech. Anal.}
\textbf{46}(1972), 241-279.

\bibitem{GB1997}
J. Gerbeau, C. Le Bris, Existence of solution for a
density-dependent magnetohydrodynamic equation, \textit{Adv.
Differ. Equ.} \textbf{2}(1997), 427-452.

\bibitem{G2014}
G. Gui, Global well-posedness of the two-dimensional
incompressible magnetohydrodynamics system with variable density
and electrical conductivity, \textit{J. Funct. Anal.}
\textbf{267}(2014),1488-1539.

\bibitem{HW2013}
X. Huang, Y. Wang, Global strong solution to the 2D nonhomogeneous
incompressible MHD equations, \textit{J. Differential Equations}
\textbf{254}(2013), 511-527.

\bibitem{HW2015}
X. Huang, Y. Wang, Global strong solution of 3D inhomogeneous
Navier-Stokes equations with density-dependent viscosity,
\textit{J. Differ. Equ.} \textbf{259}(2015), 1606-1627.

\bibitem{HX2005}
 C. He, Z. P. Xin, On the regularity of weak solutions to the
magnetohydrodynamic equations, \textit{J. Different Equations}
\textbf{213}(2005), 235-254.

\bibitem{K1987}
J. U. Kim, G. Ponce, Weak solutions of an initial boundary value
proble for an incompressible viscous fluid with nonnegative
density, \textit{SIAM J. Math. Anal.} \textbf{18}(1987), 89-96.

\bibitem{K1989}
H. Kozono, On Weak and Classical Solutions of the Two-Dimensional
Magnetohydrodynamic equations, \textit{ Tohoku Mathematical
Journal} \textbf{44}(1989), 83-86.

\bibitem{L1996}
P.L. Lions, Mathematical topics in fluid mechanics. Vol. 1.
Incompressible models, Oxford Science Publications, Oxford
University Press, New York, 1996.

\bibitem{L1999}
G. Lukaszewicz, Micropolar Fluids. Theory and Applications,
Modeling and Simulation in Science, Engineering and Technology,
Birkh$\ddot{a}$user, Boston,1999.

\bibitem{L2017}
J. Li, Local existence and uniqueness of strong solutions to the
Navier-Stokes equations with nonnegative density, \textit{J.
Differ. Equ.} \textbf{263}(2017), 6512-6536.

\bibitem{LXZ2017}
 B. L$\ddot{u}$, Z. Xu, X. Zhong, Global existence and large time
asymptotic behavior of strong solutions to the Cauchy problem of
2D density-dependent magnetohydrodynamic equations with vacuum.
\textit{J. Math. Pures Appl.} \textbf{108}(2017), 41-62.

\bibitem{MB2002}
A. J. Majda, A. L. Bertozzi, Vorticity and incompressible flow,
Cambridge University Press, Cambridge, 2002.

\bibitem{MYZ2007}
C. X. Miao, B. Q. Yuan, B. Zhang, Well-posedness for the
incompressible magnetohydrodynamic equations, \textit{Mathematical
Methods in the Applied Sciences} \textbf{30}(2007), 961-976.

\bibitem{QH2004}
S. J. Quan, C. He, Remarks on the Regularity to 3-D Ideal
Magnetohydrodynamic Equations, \textit{ Acta. Mathematica Sinica
English Series} \textbf{20}(2004), 695-708.

\bibitem{S1990}
J. Simon, Nonhomogeneous viscous incompressible fluids: existence
of velocity, density, and pressure, \textit{SIAM J. Math. Anal.}
\textbf{21}(1990), 1093-1117.

\bibitem{ST1983}
M. Sermange, R. Temam, Some mathematical questions related to the
MHD equations, \textit{Comm. Pure. Appl. Math.} \textbf{36}(1983),
635-664.

\bibitem{TA1982}
Ton, Bui An, On the existence and uniqueness of a local classical
solution of an initial-boundary value problem for incompressible
nonhomogeneous viscous fluids.  \textit{SIAM J. Math. Anal.}
\textbf{13}{1982}, 699-716.

\bibitem{W1997}
J. H. Wu, Viscous and inviscid magneto-hydrodynamics equations,
\textit{Journal Analyse Math.}, \textbf{73}(1997), 251-265.

\bibitem{W2003}
J. H Wu, The generaliczed MHD equations, \textit{J. Differ. Equ.}
\textbf{195}(2003),284-312.

\bibitem{WY2018}
D. Wang, Z. Ye, Global existence and exponential decay of strong
solutions for the inhomogeneous incompressible Navier-Stokes
equations with vacuum, arXiv: 1806.04464vl [math. AP], 2018.

\bibitem{Y2019}
Z. Ye, Global existence of strong solutions with vacuum to the
multi-dimensional inhomogeneous incompressible MHD equations,
\textit{J. Differ. Equ.} \textbf{267}(2019),2891-2917.

\bibitem{YZ2018}
B. Yuan, J. Zhao, Global regularity of 2D almost resistive MHD
equations, \textit{Nonlinear Anal. Real World Appl.}
\textbf{41}(2018),53-65.

\bibitem{ZY2015}
P. Zhang, H. Yu, Global regularity to the 3D incompressible MHD
equations, \textit{J. Math. Anal.} \textbf{432}(2015),613-631.

\end{thebibliography}
\end{document}